\newcommand{\C}{\mathcal{C}}
\newtheorem{thm}{Theorem}[section]
\newtheorem*{thm*}{Theorem}
\newtheorem{prop}[thm]{Proposition}
\newtheorem*{prop*}{Proposition}
\newtheorem{lemma}[thm]{Lemma}
\theoremstyle{definition}
\newtheorem{remark}[thm]{Remark}
\newtheorem{ass}[thm]{Assumption}
\newtheorem*{defn*}{Definition}
\newtheorem{notation}[thm]{Notation}
\numberwithin{equation}{section}
\author{\Large{Jim Delitroz and Riccardo W. Maffucci}}
\newcommand{\Addresses}{{
		\footnotesize
		
		R.W.~Maffucci, \textsc{EPFL MA SB,
			Lausanne, Switzerland 1015}\par\nopagebreak\vspace{-0.35cm}
		\texttt{riccardowm@hotmail.com} \textit{(corresponding author)}
		
		J.~Delitroz, \textsc{EPFL MA SB,
			Lausanne, Switzerland 1015}\par\nopagebreak\vspace{-0.35cm}
		\texttt{jim.delitroz@epfl.ch}
}}
\title{\Large{\uppercase{\bf On unigraphic polyhedra with one vertex of degree} ${\bf p-2}$}}
\date{}
\begin{document}
\maketitle
\Addresses

\begin{abstract}
A sequence $\sigma$ of $p$ non-negative integers is unigraphic if it is the degree sequence of exactly one graph, up to isomorphism. A polyhedral graph is a $3$-connected, planar graph. We investigate which sequences are unigraphic with respect to the class of polyhedral graphs, meaning that they admit exactly one realisation as a polyhedron.

We focus on the case of sequences with largest entry $p-2$. We give a classification of polyhedral unigraphic sequences starting with $p-2,p-2$, as well as those starting with $p-2$ and containing exactly one $3$. Moreover, we characterise the unigraphic sequences where a few vertices are of high degree. We conclude with a few other examples of families of unigraphic polyhedra.
\end{abstract}
{\bf Keywords:} Unigraphic, Unique realisation, Degree sequence, Valency, Planar graph, $3$-polytope, Enumeration.
\\
{\bf MSC(2010):} 05C07, 05C75, 05C62, 05C10, 52B05, 52B10, 05C30.

\tableofcontents
\section{Introduction}

\subsection{Background}

In this paper, we will consider finite graphs without multi-edges, nor loops. We say that a graph $G$ is planar if it can be embedded in the plane in a way such that the edges only intersect at their extremities. Kuratowski showed in 1930 \cite{Kuratowski} that $G$ is planar if and only if it does not contain any subgraph homeomorphic from $K_{3,3}$ or $K_5$. A graph $G$ of order $p$ is $n$-connected if $p>n$ and for any $n-1$ vertices $v_1,...,v_{n-1}$ of $G$, the graph $G-v_1-...-v_{n-1}$ is still connected.

We will focus on the graphs that are planar and 3-connected: they correspond topologically to the convex 3-dimensional polyhedra ($3$-polytopes), in the sense that if two polyhedra are homeomorphic to each other, then their corresponding graphs will be isomorphic, as proved by Rademacher-Steinitz \cite{Steinitz}. In the following, they will be simply referred to as polyhedra. In 1961, Tutte gave an algorithm to construct all polyhedra recursively on the number of edges \cite{tutte1}. Nowadays, this class of graphs is widely studied, as it has applications in various fields, such as chemistry \cite{Rouvray, Sciriha}, maximization problems in revenue management \cite{Kuyumcu}, and the spread of information in social networks \cite{Chen}.

Given a graph $G$ on $p$ vertices $v_1,...,v_p$, one can write its degree sequence
\[\sigma=d_1,...,d_p\] by setting $\text{deg}_G(v_i)=d_i$ for $1\leq i\leq p$. We will write degree sequences in a non-increasing way, which can always be done up to vertex relabelling. In a degree sequence, we will sometimes use the notation $d_i^n$ to mean $n$ terms equal to $d_i$, e.g.
\[6,5^3,4^3,3=6,5,5,5,4,4,4,3.\]

A sequence $s$ of integers is called graphical if one can build a graph $G$ whose degree sequence is $\sigma$, and in this case $G$ is called a realization of $\sigma$. Hakimi and Havel showed a necessary and sufficient condition for a sequence to be graphical, and the Havel-Hakimi algorithm \cite{Havel,Hakimi} gives a way to build such realization.

A graphical sequence is called \textbf{unigraphic} if there is, up to isomorphism, only one graph $G$ that realizes $\sigma$. In this case, $G$ is called a unigraph. Unigraphic sequences have been studied since the 1960s \cite{Hakimi 2, Koren, li1975}. There is a classification for unigraphic sequences of non-$2$-connected graphs \cite{john80}, and also for unigraphic sequences satisfying $d_2=d_{p-1}$ \cite[Theorem 6.1]{Koren}.

A natural question is to determine whether a sequence is unigraphic with respects to a certain family of graphs. For example, the sequence 
\[2,2,2,1,1\] admits the two realizations of Figure \ref{fig:22211}, however it is unigraphic with respect to the trees, i.e. if two trees realize $\sigma$, then they are isomorphic (for a characterization of unigraphic trees, see \cite[ex. 6.11]{Harary}).

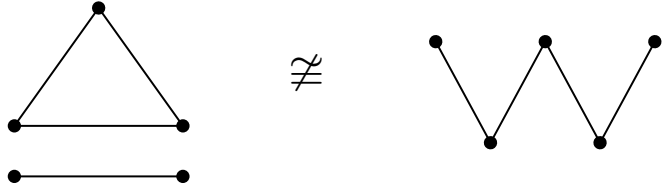
\begin{figure}
\centering    
\tikzset{every picture/.style={line width=0.75pt}} 

\begin{tikzpicture}[x=0.75pt,y=0.75pt,yscale=-0.85,xscale=0.85]

\draw    (50,200) -- (150,200) ;
\draw [shift={(150,200)}, rotate = 0] [color={rgb, 255:red, 0; green, 0; blue, 0 }  ][fill={rgb, 255:red, 0; green, 0; blue, 0 }  ][line width=0.75]      (0, 0) circle [x radius= 3.35, y radius= 3.35]   ;
\draw [shift={(50,200)}, rotate = 0] [color={rgb, 255:red, 0; green, 0; blue, 0 }  ][fill={rgb, 255:red, 0; green, 0; blue, 0 }  ][line width=0.75]      (0, 0) circle [x radius= 3.35, y radius= 3.35]   ;
\draw    (100,130) -- (150,200) ;
\draw [shift={(100,130)}, rotate = 54.46] [color={rgb, 255:red, 0; green, 0; blue, 0 }  ][fill={rgb, 255:red, 0; green, 0; blue, 0 }  ][line width=0.75]      (0, 0) circle [x radius= 3.35, y radius= 3.35]   ;
\draw    (50,200) -- (100,130) ;
\draw    (50,230) -- (150,230) ;
\draw [shift={(150,230)}, rotate = 0] [color={rgb, 255:red, 0; green, 0; blue, 0 }  ][fill={rgb, 255:red, 0; green, 0; blue, 0 }  ][line width=0.75]      (0, 0) circle [x radius= 3.35, y radius= 3.35]   ;
\draw [shift={(50,230)}, rotate = 0] [color={rgb, 255:red, 0; green, 0; blue, 0 }  ][fill={rgb, 255:red, 0; green, 0; blue, 0 }  ][line width=0.75]      (0, 0) circle [x radius= 3.35, y radius= 3.35]   ;
\draw    (300,150) -- (332.5,210) ;
\draw [shift={(332.5,210)}, rotate = 61.56] [color={rgb, 255:red, 0; green, 0; blue, 0 }  ][fill={rgb, 255:red, 0; green, 0; blue, 0 }  ][line width=0.75]      (0, 0) circle [x radius= 3.35, y radius= 3.35]   ;
\draw [shift={(300,150)}, rotate = 61.56] [color={rgb, 255:red, 0; green, 0; blue, 0 }  ][fill={rgb, 255:red, 0; green, 0; blue, 0 }  ][line width=0.75]      (0, 0) circle [x radius= 3.35, y radius= 3.35]   ;
\draw    (332.5,210) -- (365,150) ;
\draw [shift={(365,150)}, rotate = 298.44] [color={rgb, 255:red, 0; green, 0; blue, 0 }  ][fill={rgb, 255:red, 0; green, 0; blue, 0 }  ][line width=0.75]      (0, 0) circle [x radius= 3.35, y radius= 3.35]   ;
\draw    (365,150) -- (397.5,210) ;
\draw [shift={(397.5,210)}, rotate = 61.56] [color={rgb, 255:red, 0; green, 0; blue, 0 }  ][fill={rgb, 255:red, 0; green, 0; blue, 0 }  ][line width=0.75]      (0, 0) circle [x radius= 3.35, y radius= 3.35]   ;
\draw    (397.5,210) -- (430,150) ;
\draw [shift={(430,150)}, rotate = 298.44] [color={rgb, 255:red, 0; green, 0; blue, 0 }  ][fill={rgb, 255:red, 0; green, 0; blue, 0 }  ][line width=0.75]      (0, 0) circle [x radius= 3.35, y radius= 3.35]   ;

\draw (211,152) node [anchor=north west][inner sep=0.75pt]  [font=\LARGE] [align=left] {$\displaystyle \ncong $};

\end{tikzpicture}
\caption{Two realisations of $22211$.}
\label{fig:22211}
\end{figure}

\paragraph{Motivation.}
This paper is about sequences that are unigraphic with respect to the class of polyhedral graphs, following the work of \cite{Maffucci 1, Maffucci 2}. When constructing polyhedra with Tutte's algorithm, certain patterns emerge (cf. \cite{gasmaf}). Motivated by data patterns, in \cite{Maffucci 1, Maffucci 2} the second author partially classified the unigraphic polyhedra of graph radius one. A graph $G$ on $p$ vertices has radius one if and only if one vertex $v$ is adjacent to all others, i.e., if and only if the first term of its degree sequence $\sigma$ is $p-1$. This article investigates the follow-up question of unigraphic degree sequences starting with $p-2$. Henceforth, when a sequence is referred to as unigraphic, it will implicitly mean that it is unigraphic with respect to the class of polyhedra.

\subsection{Main results}
We will use Greek letters to denote degree sequences of order $p$, starting with $p-2$. When computing unigraphic sequences of polyhedra via Tutte's algorithm, the following patterns emerge. If $p\geq 7$, and the sequence starts with $p-2,p-2$, then it is of one of two types $\alpha(p),\beta(p)$, with seven exceptions. Similarly, if $p\geq 7$ and the sequence starts with $p-2$ and contains exactly one $3$, then it is of one of two types $\alpha(p),\gamma(p)$, with one exception. Our first two theorems rigorously prove these assertions. 

\begin{thm}\label{thm2}
	Let $p\geq 7$, and
	\begin{align*}
	\alpha=\alpha(p) &= p-2,p-2,5, 4^{p-4},3, \\
	\beta=\beta(p) &= p-2,p-2, 4^{p-2}.\\
	\end{align*}
	Then $\alpha$ and $\beta$ are polyhedral unigraphic sequences. Moreover, if $p\geq 11$, then there are no other polyhedral unigraphic sequences starting with $p-2,p-2$.
\end{thm}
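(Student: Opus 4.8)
The plan is to exploit a single numerical observation that rigidifies the whole problem: both $\alpha$ and $\beta$ have degree sum $6p-12$. Indeed $2(p-2)+5+4(p-4)+3=6p-12$ and $2(p-2)+4(p-2)=6p-12$, so any polyhedral realisation of either sequence uses the maximal number $3p-6$ of edges and is therefore a triangulation (maximal planar graph). Since every triangulation on at least four vertices is automatically $3$-connected and, by Whitney's theorem, has an essentially unique planar embedding, proving unigraphicity reduces to showing there is a unique triangulation with the prescribed degrees. I would first record this reduction, together with the coarse bound $\sum_{i\ge 3} d_i\le 4(p-2)$ coming from $e\le 3p-6$, which forces the tail of any sequence starting $p-2,p-2$ to consist almost entirely of $3$'s and $4$'s with only a small total excess above $4$.

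For existence and uniqueness of the two sequences I would fix the two vertices $u,v$ of degree $p-2$ and split on whether $u\sim v$. Each of $u,v$ fails to be adjacent to exactly one vertex. If $u\not\sim v$ then both are adjacent to every one of the remaining $p-2$ vertices $W$; the planar embeddings of $K_{2,p-2}$ have all faces quadrilaterals $u\,w_i\,v\,w_{i+1}$ for some cyclic ordering of $W$, so the only way to triangulate is to insert every equatorial edge $w_iw_{i+1}$, yielding the bipyramid over a $(p-2)$-gon and degree sequence exactly $\beta$. This simultaneously realises $\beta$ and shows its realisation is unique. For $\alpha$ I would show the degree-$3$ vertex cannot be adjacent to both $u$ and $v$ (otherwise deleting it leaves a triangulation on $p-1$ vertices in which $u,v$ bound a common face while retaining near-maximal degree $p-3$, which one checks is impossible), thereby forcing $u\sim v$; I would then identify the forced near-bipyramidal structure obtained by seating a single stacked degree-$3$ vertex appropriately, exhibit it explicitly, and argue uniqueness from the rigid placement of the unique $5$ and the unique $3$ relative to $u,v$. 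These arguments are intended to work for all $p\ge 7$.

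The bulk of the work is the classification for $p\ge 11$. Here I would take an arbitrary polyhedral $\sigma=p-2,p-2,d_3,\dots,d_p$, again fix $u,v$, and organise the analysis by the adjacency of $u,v$, by their two missing vertices, and by the distribution of the tail (how many $3$'s, how many entries $\ge 5$, and where the excess above $4$ sits). In each configuration other than $\alpha$ and $\beta$ the goal is to produce two non-isomorphic polyhedral realisations, thereby proving the sequence is \emph{not} unigraphic; the natural tools are diagonal flips of an edge lying in two triangular faces and, more generally, $2$-switches performed inside one region of the graph while a distinguishing feature is frozen elsewhere, together with the freedom to seat a degree-$3$ vertex in two combinatorially inequivalent faces. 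The hypothesis $p\ge 11$ is what guarantees enough room for two inequivalent placements and excludes the finitely many low-order exceptions (for instance $\alpha(7)=5,5,5,4,4,4,3$ has three vertices of degree $p-2$, and further degeneracies occur for $p\le 10$).

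I expect the main obstacle to be precisely this last step: for every sequence \emph{near} $\alpha$ or $\beta$ — those with only one or two $3$'s, or with a single $5$ and otherwise $4$'s — the two candidate realisations are structurally close, so one must verify both that the switched graph remains $3$-connected and planar (planarity can fail under a careless flip) and that it is genuinely non-isomorphic to the original, which for these symmetric, near-bipyramidal graphs requires a delicate invariant, such as the cyclic pattern of neighbour-degrees around $u$ and $v$, rather than a crude degree count. Controlling these borderline cases uniformly in $p$, rather than sequence by sequence, is where the real effort lies.
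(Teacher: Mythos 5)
Your opening reduction is sound and is in fact slightly cleaner than what the paper does: both $\alpha$ and $\beta$ have degree sum $6p-12$, so every realisation is a triangulation, and your $K_{2,p-2}$ argument for the case $u\not\sim v$ (each quadrilateral face $u\,w_i\,v\,w_{i+1}$ must receive the diagonal $w_iw_{i+1}$, forcing the bipyramid) is a neat replacement for the paper's lemma that $G-u-v$ is a path or a cycle. But there are genuine gaps beyond that point. First, for $\beta$ you only treat $u\not\sim v$; you never exclude a realisation of $\beta$ with $u\sim v$, and this is not free --- the paper's entire Section on the adjacent case is needed to see that every realisation with the two big vertices adjacent forces at least one vertex of degree $3$, so that $\beta$ cannot occur there. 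Second, your uniqueness argument for $\alpha$ hinges on the parenthetical claim that the degree-$3$ vertex cannot be adjacent to both $u$ and $v$ ``which one checks is impossible,'' followed by ``argue uniqueness from the rigid placement'' --- neither is an argument, and the configuration you dismiss (deleting a stacked degree-$3$ vertex to leave a triangulation with two vertices of degree $p-3$) is exactly the kind of structure that does occur for other sequences in this family (e.g.\ $\sigma_9=p-2,p-2,5,4^{p-6},3,3,3$ in the paper's Table), so the impossibility genuinely depends on $\alpha$ having only one $3$ and must be proved.

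The largest gap is the ``moreover'' clause for $p\ge 11$, which is the bulk of the theorem and for which you offer only a strategy (diagonal flips and $2$-switches per configuration), while yourself acknowledging that this is where the real effort lies. The missing idea that makes this tractable in the paper is a structural one: when $u\sim v$, every realisation contains a canonical spanning skeleton $G'$ --- the cycle through the $p-4$ common neighbours of $u,v$ together with all chords to $u$ and to $v$ --- and the two exceptional vertices $a$ (missed by $v$) and $b$ (missed by $u$) can be inserted into $G'$ in only finitely many combinatorially distinct ways (inside a triangular region, or by subdividing a cycle edge). This yields a complete finite list of exactly twelve possible sequences starting with $p-2,p-2$ (the paper's Proposition on this point), after which non-unigraphicity of everything other than $\alpha,\beta$ follows because each of the other ten sequences is produced by at least two mutually exclusive insertion patterns. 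Your degree-sum bound alone does not produce a finite list of candidate tails, and without the skeleton device your case analysis has no termination mechanism; the flip/switch arguments you propose would then have to be verified against an a priori unbounded family of configurations.
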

The proof of Theorem \ref{thm2} may be found in section \ref{sec2}. For $p\leq 10$, we have computed the data in Table \ref{tab0}.
\begin{table}[h!]
	\centering
	$\begin{array}{|l|l|l|}
	\hline \text{Order}&\text{Unigraphic sequences beginning with } p-2,p-2\\	\hline
	p=6& 4,4,3,3,3,3; \hspace{1cm} 4,4,4,4,3,3; \hspace{1cm} 4,4,4,4,4,4.\\\hline  
	p=7& 5,5,5,4,4,4,3; \hspace{1cm}
	5,5,4,4,4,4,4.\\\hline
	p=8& 6, 6, 5, 4, 4, 4, 4, 3; \hspace{1cm} 6, 6, 4, 4, 4, 4, 4, 4; \hspace{1cm} 6, 6, 6, 6, 3, 3, 3, 3; \\&6, 6, 6, 5, 4, 3, 3, 3. \\\hline
	p=9&  7, 7, 5, 4, 4, 4, 4, 4, 3; \hspace{1cm} 7, 7, 4, 4, 4, 4, 4, 4, 4; \hspace{1cm} 7, 7, 6, 6, 4, 3, 3, 3, 3; \\&7, 7, 6, 5, 5, 3, 3, 3, 3; \hspace{1cm} 7, 7, 5, 5, 5, 4, 3, 3, 3.\\\hline
	p=10&8, 8, 5, 4, 4, 4, 4, 4, 4, 3; \hspace{1cm} 8, 8, 4, 4, 4, 4, 4, 4, 4, 4; \hspace{1cm} 8, 8, 6, 5, 5, 4, 3, 3, 3, 3;\\&
	8, 8, 5, 5, 5, 5, 3, 3, 3, 3.
	\\\hline
	\end{array}$
	\caption{Unigraphic sequences starting with $p-2,p-2$, for $p\leq 10$.}
	\label{tab0}
\end{table}

In particular, for $p\geq 7$, there are seven exceptional sequences (i.e., not of types $\alpha$ or $\beta$), of which two each for $p=8$ and $p=10$, and three for $p=9$. Note that $\beta(p)$ is the sequence of the $p-2$-gonal bipyramid.

\begin{thm}\label{thm1}
Let $p\geq 7$. The only polyhedral unigraphic sequences of order $p$ starting with $p-2$ and containing exactly one $3$ are the following:

\begin{align*}
  \alpha=\alpha(p) &= p-2,p-2,5, 4^{p-4},3, \\
  \gamma=\gamma(p) &= p-2, 4^{p-2}, 3 \qquad p \text{ odd},
 \end{align*}
 together with the exceptional $6,5^3,4^3,3$.
\end{thm}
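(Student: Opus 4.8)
The plan is to prove Theorem~\ref{thm1} in two halves: first establish that $\alpha(p)$ and $\gamma(p)$ (for odd $p$) are genuinely polyhedral and unigraphic, then show that no other sequence of order $p$ beginning with $p-2$ and containing exactly one $3$ can be unigraphic (apart from the sporadic $6,5^3,4^3,3$). Since $\alpha(p)$ was already shown to be unigraphic in Theorem~\ref{thm2}, I would not need to redo that; it suffices to exhibit the unique polyhedral realisation of $\gamma(p)$ and argue its uniqueness directly. The natural candidate for $\gamma(p)$ is an antiprism-like or ``drum'' construction: a vertex $v$ of degree $p-2$ adjacent to all but one vertex $w$, where $w$ has degree $3$ and the remaining $p-2$ vertices each have degree $4$. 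I would first describe this realisation explicitly (the high-degree vertex $v$ together with a cycle on the degree-$4$ vertices, plus the degree-$3$ vertex $w$ sitting opposite $v$), verify $3$-connectedness and planarity, and then explain why the parity constraint forces $p$ to be odd via the handshake/edge-count identity.

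The core of the argument is the uniqueness and the exclusion of all competitors. Here I would exploit the structural rigidity forced by the vertex $v$ of degree $p-2$. If $v$ is adjacent to all but one other vertex, the non-neighbour $u$ of $v$ has all of its neighbours among $v$'s neighbours, and since the sequence contains exactly one $3$, I can pin down where that degree-$3$ vertex must sit (either at $u$ or among the neighbours of $v$). The plan is to split into cases according to whether the unique degree-$3$ vertex is the non-neighbour of $v$ or one of its neighbours, and in each case to analyse the induced graph on $N(v)$, which must be a planar graph whose degrees are essentially determined by subtracting the edges to $v$. Because $v$ covers all but one vertex, this induced subgraph is highly constrained (roughly a path or cycle structure on $N(v)$ together with the bookkeeping from $u$), and I would argue that any deviation from the $\alpha$ or $\gamma$ patterns either breaks $3$-connectedness/planarity or produces a second non-isomorphic realisation, hence fails to be unigraphic.

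Concretely, I would set up the edge-counting identity $\sum d_i = 2E$ together with Euler's formula $E \le 3p-6$ to bound how many entries can exceed $4$, showing that once the first entry is $p-2$ and there is exactly one $3$, the remaining entries are tightly squeezed toward $4$ with at most a bounded number of $5$'s; this is what forces the two shapes $\alpha$ and $\gamma$ and isolates the sporadic case at small $p$. I expect the main obstacle to be the non-uniqueness (i.e., showing a candidate is \emph{not} unigraphic): for each competing sequence I must produce two non-isomorphic polyhedral realisations, and constructing an explicit second polyhedron while certifying both planarity and $3$-connectedness is the delicate step. I would handle this by a swapping/rewiring argument along a $4$-cycle in $N(v)$ (a ``two-switch'' that preserves all degrees), checking that the swap yields a non-isomorphic $3$-connected planar graph; the technical heart is verifying that such a two-switch is always available and always stays within the polyhedral class except precisely for $\alpha$, $\gamma$, and the sporadic sequence, where the rigidity of the configuration blocks it.
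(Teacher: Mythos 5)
Your overall architecture (verify $\alpha$ and $\gamma$, then exclude every competitor by exhibiting a second realisation) matches the paper's, but two of the steps you lean on would not go through as described. First, the edge-counting step is quantitatively too weak: with $d_1=p-2$, one entry equal to $3$ and the rest at least $4$, Euler's bound $\sum d_i\le 6p-12$ leaves a slack of $p-5$, which still permits, for example, a second vertex of degree $p-2$ or $p-3$, or many entries equal to $5$ or $6$ (the paper must explicitly dispose of the whole family $p-2,m,5,5,4^{m-2+2k},3$ and of $p-2,p-3,5,5,4^{p-5},3$, all of which pass the count; indeed $\alpha$ itself saturates the bound). So the claim that the remaining entries are ``tightly squeezed toward $4$ with at most a bounded number of $5$'s'' is false, and the reduction to the shapes $\alpha,\gamma$ cannot come from counting. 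Second, your description of the induced graph on $N(v)$ as ``roughly a path or cycle'' is what holds in the two-vertices-of-degree-$p-2$ situation of Theorem \ref{thm2}; with a single vertex $y$ of degree $p-2$ the correct picture is a cycle $C$ through all of $N(y)$ (planarity forces $N(y)$ onto one face of $G-y$) together with an a priori arbitrary set of chords, plus the non-neighbour $a$ placed inside one region. The missing idea that makes the exclusion tractable is the paper's observation that, precisely because there is only one vertex of degree $3$, the chord graph $R=G-y-E(C)-c$ is a forest, or a forest plus a single triangle through $a$: any further cycle among the chords would cut off a second vertex of degree $3$ by outerplanarity.

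Once that lemma is in place, the ``rewiring'' you gesture at is not a two-switch along a $4$-cycle but a rearrangement of the chord forest: forests with a prescribed degree sequence are realised as a caterpillar plus copies of $K_2$, and replacing, say, $\mathcal{C}(m,j_2,\dots,j_l)$ by $\mathcal{C}(4,j_2,\dots,j_l,m-3)$ produces a second polyhedron with the same degree sequence whenever the component of $a$ is not a star of degree $4$. This is also what makes the uniqueness of $\gamma(p)$ (and the parity of $p$) immediate: there $\deg(a)=4$, so $R$ has degree sequence $4,1^{p-3}$, whose only realisation is $S_4$ together with $\tfrac{p-7}{2}$ copies of $K_2$. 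Your case in which the unique degree-$3$ vertex is the non-neighbour of $v$ is in fact vacuous (the paper shows $a\ne c$, again by outerplanarity), which is harmless; but without the forest/caterpillar structure, the heart of the argument --- certifying that a degree-preserving modification exists for every competitor and is blocked exactly for $\alpha$, $\gamma$ and $6,5^3,4^3,3$ --- is missing rather than merely deferred.
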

Theorem \ref{thm1} will be proven in section \ref{sec3}. The following result characterises unigraphic sequences where a few vertices are of high degree.

\begin{thm}
	\label{thm3}
Let $p\geq 11$, and $\nu(p)$ be a non-increasing polyhedral sequence beginning with
\[p-2,d_1,d_2,d_3,\dots,\]
where
\[d_1,d_2,d_3\geq 7\]
and
\[d_1+d_2+d_3\geq p+10.\]
If $\nu(p)$ is unigraphic, then \[\nu_m=\nu_m(p)=p-2, (m+6)^2,p-2m-2,4^{p-8},3^4,\]
with $1\leq m<\frac{1}{2}(p-8)$.
\end{thm}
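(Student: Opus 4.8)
The plan is to analyse a polyhedral realisation $G$ of $\nu(p)$ around the unique vertex $v_0$ of degree $p-2$, and to show that unigraphicity collapses all the remaining freedom down to the single family $\nu_m$. First I would record the global numerics. Planarity gives $\sum_i d_i\le 6p-12$, and since $G$ is $3$-connected every degree is at least $3$; combining these with the three large entries yields $d_1+d_2+d_3\le 2p+2$, so together with the hypothesis $d_1+d_2+d_3\ge p+10$ the vertices other than $v_0,d_1,d_2,d_3$ carry total excess $\sum(d_i-3)\le p-8$ over the minimum. The vertex $v_0$ is non-adjacent to exactly one vertex $w$; once $G$ is shown to be a triangulation, the neighbours of $v_0$ form a cycle $C$ of length $p-2$ bounding a disk whose only interior vertex is $w$, so $G$ is an apex over a triangulated polygon with a single interior vertex. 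Writing $c_i$ for the number of chords of $C$ at a boundary vertex $u_i$ and $\epsilon_i\in\{0,1\}$ for adjacency to $w$, one has $\deg_G(u_i)=3+c_i+\epsilon_i$, which I would use to translate the degree sequence into combinatorial data about chords and ears.

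The second step is to show that unigraphicity forces the coarse shape $p-2,d_1,d_2,d_3,4^{p-8},3^4$ with $d_1+d_2+d_3=p+10$. I would establish, in turn, that $G$ is a triangulation (so $\sum_i d_i=6p-12$), that no vertex outside the three largest has degree $\ge 5$, and that $d_1+d_2+d_3$ attains its minimum $p+10$. Each failure leaves slack — a non-triangular face, a vertex of degree $\ge 5$, or strict inequality — which I would convert into a degree-preserving rewiring producing a second, non-isomorphic polyhedral realisation, contradicting unigraphicity. Granting these three facts, a vertex count forces exactly four $3$'s and exactly $p-8$ $4$'s, leaving only the identity of the three large degrees to be determined.

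The core of the argument is to pin those three degrees. Here the plan is to show that the triangulated disk $G-v_0$ realises the multiset uniquely only when two of its three large degrees coincide, say at a common value $m+6$, in which case the remaining large degree is $p-2m-2$. The mechanism is again a degree-preserving move inside the disk: if the two largest fan-like regions are unequal, or if an ear or a chord sits asymmetrically, one can perform a $2$-switch, or for a triangulation a coordinated pair of diagonal flips, that remains planar and $3$-connected but changes the isomorphism type. Such a move exists unless the configuration is symmetric under exchanging the two equal-degree regions, and that exchange symmetry is exactly the condition that two of the large degrees coincide. The relation $d_1+d_2+d_3=p+10$ then forces the third to be $p-2m-2$, and the inequalities $m+6\ge 7$ and $p-2m-2\ge 7$ yield precisely $1\le m<\tfrac12(p-8)$.

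The main obstacle is the rewiring analysis underlying both the second and third steps: for every admissible sequence differing from all the $\nu_m$ one must exhibit an explicit second realisation and verify that it is planar, $3$-connected, and genuinely non-isomorphic to the first, while for $\nu_m$ one must check that the exchange symmetry obstructs all such moves. The delicate points are performing the switches strictly inside the triangulated disk without creating multi-edges or a separating pair of vertices, and treating the near-degenerate cases where the three large degrees almost coincide (for $m$ close to $(p-8)/3$) or where relocating an ear might accidentally reproduce an isomorphic graph. I expect the bulk of the work to be a case analysis of the chord structure of $G-v_0$, organised around the positions of the large vertices and the four ears, with the symmetry of the extremal configuration serving as the uniform obstruction in the $\nu_m$ case.
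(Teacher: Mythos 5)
Your overall picture of the extremal configuration is right, and your final step (two of the three large degrees must coincide, else a degree-preserving exchange of the two unequal ``fans'' yields a second realisation) is in the same spirit as the paper's concluding remark. But the engine of the proof is missing. Your only quantitative input from planarity is the edge bound $\sum_i d_i\le 6p-12$, which yields $d_1+d_2+d_3\le 2p+2$; the theorem lives entirely in the gap between $p+10$ and $2p+2$. The paper's argument hinges on the much sharper bound $d_1+d_2+d_3\le p+10$, valid for \emph{every} polyhedral realisation, obtained by bounding common neighbourhoods: $K_{3,3}$-avoidance forbids three vertices of $U$ from all being adjacent to a fixed pair $x_i,x_j$, and a case analysis on the location of the unique non-neighbour $a$ of $y$ (Lemma \ref{lem:a}) shows that unless $a\in U$ and $ax_1,ax_2,ax_3\in E(G)$, one already has $d_1+d_2+d_3\le p+9$. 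Equality at $p+10$ then forces the entire structure --- each pair $x_i,x_j$ has exactly one common neighbour in $U\setminus\{a\}$, every other vertex of $U$ is adjacent to exactly one $x_i$, $G$ is maximal planar, $\deg(a)=3$ --- from which the tail $4^{p-8},3^4$ and the relation $d_i=6+f_i$ with $f_1+f_2+f_3=p-8$ drop out. Nothing in your outline produces this bound.

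Relatedly, you propose to establish that $G$ is a triangulation, that no fourth vertex has degree $\ge 5$, and that $d_1+d_2+d_3=p+10$ by degree-preserving rewirings contradicting unigraphicity. These three statements are properties of the \emph{sequence} $\nu$, not of a particular realisation, so no degree-preserving move can create or destroy them; in particular a rewiring can never show that $d_1+d_2+d_3$ cannot exceed $p+10$, and if it did exceed $p+10$ you would be asked to produce a second realisation of a sequence that admits none. What is actually true, and what the paper proves, is that sequences violating these constraints have \emph{no} polyhedral realisation at all (so the theorem is vacuous for them); this is a non-existence statement that must come from the planarity counting above, not from unigraphicity. As written, your second step cannot be executed, and without it the reduction to the family $\nu_m$ does not get off the ground.
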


Determining all unigraphic sequences starting with $p-2$, or even more generally, all the unigraphic with no constraints, seems to be a difficult problem, that could be the focus of future work. As another step in this direction, here we also give another family of unigraphic sequences starting with $p-2$ not included in the above results.

\begin{prop}\label{twoseq}
For even $p\geq 16$, the sequence
\[\mu=\mu(p)=p-2,\frac{p-2}{2}, 6^{(p-2)/2},3^{(p-2)/2}\] 
is unigraphic.
\end{prop}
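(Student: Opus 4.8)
The key enabling observation is that $\mu(p)$ forces a triangulation: summing its entries gives $\sum d_i=(p-2)+\tfrac{p-2}{2}+6\cdot\tfrac{p-2}{2}+3\cdot\tfrac{p-2}{2}=6(p-2)=2(3p-6)$, so every realisation has exactly $3p-6$ edges and is therefore maximal planar, with all faces triangles. I will use this throughout. First I would exhibit the realisation. Writing $p=2n+2$ with $n=\tfrac{p-2}{2}$, take an apex $u$ joined to a cycle $c_1c_2\cdots c_{2n}$, add the chords $c_{2i-1}c_{2i+1}$ (which form an $n$-cycle on the odd-indexed vertices), and join a second apex $w$ to all $n$ odd-indexed vertices. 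Counting the $4n=2p-4$ faces ($2n$ triangles $u\,c_i\,c_{i+1}$, the $n$ ``ears'' $c_{2i-1}c_{2i}c_{2i+1}$, and the $n$ triangles $w\,c_{2i-1}\,c_{2i+1}$) shows this is a triangulation in which $\deg u=2n=p-2$, $\deg w=n$, each odd $c_i$ has degree $6$ and each even $c_i$ has degree $3$. As every triangulation on at least four vertices is $3$-connected, this is a polyhedron realising $\mu(p)$.

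For uniqueness, let $G$ be any polyhedral realisation; it is a triangulation by the edge count. In a triangulation on $p\ge 5$ vertices the degree-$3$ vertices form an independent set (two adjacent degree-$3$ vertices would force $G=K_4$), and each degree-$3$ vertex is stacked inside the triangle of its three pairwise-adjacent neighbours. Let $u$ be the unique vertex of degree $p-2$; its link is a cycle $C=c_1\cdots c_{2n}$, and the single non-neighbour $x$ of $u$ is the only interior vertex of the triangulated disc $D$ bounded by $C$. Euler's formula on $D$ yields $2n$ triangles and $2n$ interior edges, each being either a chord between two boundary vertices or a spoke from $x$. A boundary vertex has degree $3$ exactly when it is an ear, i.e.\ its two cycle-neighbours are joined by a chord and it carries no other edge, and such ears are pairwise non-adjacent along $C$.

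The heart of the argument is to show that $x=w$, i.e.\ that the non-neighbour of $u$ is the degree-$n$ vertex. Granting this, the $n$ degree-$3$ vertices are precisely the ears on $C$, so they form a maximum independent set of $C_{2n}$ and must alternate; the ear-chords then constitute the $n$-cycle on the degree-$6$ vertices, and since $w$ can only be adjacent to the (non-ear) degree-$6$ vertices and has degree $n$, it is joined to all $n$ of them. This reconstructs $G$ uniquely as the graph above. I expect the main obstacle to be excluding the two remaining possibilities for $x$, namely a degree-$6$ or a degree-$3$ vertex. I would rule these out by a counting argument inside the central polygon cut out by the ears: if $x$ had degree $6$ and missed a degree-$6$ corner, that corner would lie in a gap sub-polygon whose triangulation must make every interior corner incident to exactly one diagonal, but every polygon triangulation has at least two ears, forcing a missed corner down to degree $5$, a contradiction; the case of $x$ of degree $3$ is treated by the analogous analysis of the central triangle it would create.

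An alternative endgame routes through Theorem \ref{thm2}. Removing the independent set $S$ of $n$ degree-$3$ vertices de-stacks $G$ to a triangulation $G-S$ on $n+2=\tfrac{p+2}{2}$ vertices, and the goal becomes to identify its degree sequence as $\beta(\tfrac{p+2}{2})=n,n,4^{n}$. Since $\beta$ is unigraphic by Theorem \ref{thm2}, $G-S$ would then be the $n$-gonal bipyramid with non-adjacent apexes $u,w$; recovering $G$ requires stacking $n$ vertices so that $u$ gains $n$, $w$ gains $0$, and each equatorial vertex gains $2$, which forces stacking into exactly the $n$ faces incident to $u$, again uniquely. Either route reduces the proposition to the same crux — pinning down the distribution of the degree-$3$ vertices, equivalently the identity of $x$ — and that is where the genuine work lies; the reconstruction and the $3$-connectivity are routine by comparison.
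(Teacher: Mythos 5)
Your strategy is correct and reaches the right conclusion, but by a genuinely different route from the paper's. Both proofs hinge on the same crux, namely that the non-neighbour of the degree-$(p-2)$ vertex must be the vertex of degree $\tfrac{p-2}{2}$, and once that is known your reconstruction coincides with the paper's first case. For the crux itself, however, the paper works with $R=F-E(C)$ and, assuming the non-neighbour $a$ has degree $6$ or $3$, cuts off an outerplanar piece $F'$ along a chord $xu_1$ incident to the degree-$\tfrac{p-2}{2}$ vertex $x$, then shows the component of $F'-x$ containing $u_1$ would have to be a five-vertex graph that cannot be drawn; you instead exploit the triangulated-disc structure directly, excising the degree-$3$ ears of the link cycle and running an ear-count on the sub-polygons cut out by the link of the interior vertex. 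Your version is arguably cleaner and more conceptual (it uses only the standard facts that a triangulated $m$-gon with $m\ge 4$ has at least two pairwise non-adjacent ears), and your alternative endgame via the unigraphicity of $\beta\bigl(\tfrac{p+2}{2}\bigr)$ from Theorem \ref{thm2} is a nice reduction the paper does not make.

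One caveat: the crux is only sketched, and the exclusion of a degree-$3$ non-neighbour is not quite as ``analogous'' as you suggest. If you apply the ear-count directly to the three regions flanking the central triangle $t_1t_2t_3$ inside the disc $D$, their ears can legitimately be degree-$3$ vertices of $G$, so no contradiction arises at that stage. The argument does go through, but you must first cut off the $n-1$ degree-$3$ ears of $C$ (where $n=\tfrac{p-2}{2}$), pass to the resulting triangulated $(n+1)$-gon, and note that an ear $e$ of that polygon satisfies $\deg_G(e)=3+(\text{number of ear-chords at } e)+[\,e\sim x\,]\le 6$, with equality forcing $e\in\{t_1,t_2,t_3\}$ and two ear-chords at $e$; since the polygon has at least two non-adjacent ears while the $t_i$ are pairwise adjacent, a contradiction follows. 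The degree-$6$ exclusion works exactly as you describe. With that step written out, your proof is complete and independent of the paper's.
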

The proof of Proposition \ref{twoseq} is relegated to Appendix \ref{appa}.

\paragraph{Data availability statement.}
The data produced to motivate this work is available on request.

\paragraph{Acknowledgements.}
J.D. worked on this project as partial fulfilment of his master thesis at EPFL, autumn 2022, under the supervision of Maryna Viazovska and R.M.
\\
R.M. was supported by Swiss National Science Foundation project 200021\_184927, held by Prof. Maryna Viazovska.

\section{Proof of Theorem \ref{thm2}}
\label{sec2}
\subsection{All polyhedral sequences starting with $p-2,p-2$}
In this section, we classify the polyhedral degree sequences starting with $p-2,p-2$. Among them, we will find in particular the unigraphic ones, to prove Theorem \ref{thm2}. In the next statement, in a degree sequence we write `$\dots$' as a shorthand for $p-2,p-2$ followed by as many $4$'s as necessary so that the sequence has length $p$. For instance, for fixed $p$, the expression $\sigma_1=6,6,3,3,3,3,\dots$ is a shorthand for
\[\sigma_1(p)=p-2,p-2,6,6,4^{p-8},3,3,3,3.\]

\begin{prop}
	\label{theprop}
	Let $p\geq 7$, and $G$ be a polyhedral graph on $p$ vertices, at least two of which of degree $p-2$. Then the possible degree sequences of $G$ are listed in Table \ref{tab2}.
	\begin{table}[h!]
		\centering
		$\begin{array}{|l|l|l|}
		\hline
		\sigma_1=6,6,3,3,3,3,\dots
		&\sigma_2=6,5,5,3,3,3,3,\dots
		&\sigma_3=6,5,3,3,3,\dots
		\\\hline\sigma_4=5,5,5,5,3,3,3,3,\dots
		&\sigma_5=5,5,5,3,3,3,\dots
		&\sigma_6=5,5,3,3,\dots
		\\\hline\sigma_7=5,5,3,3,3,3,\dots
		&\sigma_8=5,3,\dots =\alpha
		&\sigma_9=5,3,3,3,\dots
		\\\hline\sigma_{10}=\beta
		&\sigma_{11}=3,3,\dots
		&\sigma_{12}=3,3,3,3,\dots.
		\\\hline
		\end{array}$
		\caption{All polyhedral sequences starting with $p-2,p-2$, for $p\geq 11$. The dots in each are a shorthand for $p-2,p-2$ followed by as many $4$'s as necessary so that each sequence has length $p$. The sequence $\beta$ is the same as in the statement of Theorem \ref{thm2}, i.e. the sequence of a $p-2$-gonal bipyramid.}
		\label{tab2}
	\end{table}
\end{prop}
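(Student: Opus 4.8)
The plan is to fix two vertices $u,v$ of degree $p-2$ and to organise the whole argument around a single planarity fact: a planar graph contains no $K_{3,3}$, so \emph{any three vertices have at most two common neighbours}. Since each of $u,v$ fails to be adjacent to exactly one other vertex, write $u'$ for the non-neighbour of $u$ and $v'$ for that of $v$, set $R:=V\setminus\{u,v\}$, and split into three cases according to the adjacency of $u,v$: the case $u\not\sim v$ (forcing $u'=v$, $v'=u$), and the two subcases of $u\sim v$ in which $u',v'\in R$ either coincide or are distinct.

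First I would dispose of the two easy cases. If $u\not\sim v$, then $u$ and $v$ are both adjacent to all of $R$, and planarity of the resulting $K_{2,p-2}$ forces $G-u-v$ to be a subgraph of a single cycle $C_{p-2}$ (the vertices consecutive in the common rotation of $u$ and $v$); $3$-connectivity forbids $\{u,v\}$ from being a cut and forbids vertices of degree $2$, so $G-u-v$ is a spanning connected subgraph of $C_{p-2}$ of minimum degree $1$, hence a Hamiltonian path or the full cycle, giving exactly $\sigma_{11}$ and $\sigma_{10}=\beta$. If instead $u\sim v$ and $u'=v'=:x$, then $x$ is adjacent to neither $u$ nor $v$, so all its neighbours lie among the common neighbours of $u$ and $v$; three such neighbours would exhibit $\{x,u,v\}$ as one side of a $K_{3,3}$, so $\deg x\le 2$, contradicting $3$-connectivity. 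Thus this subcase cannot occur.

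The main case is $u\sim v$ with $u'\neq v'$. Here the common neighbours $S:=R\setminus\{u',v'\}$, of which there are $p-4$, are each adjacent to both $u$ and $v$. Any $S$-neighbour of a fixed $s\in S$ is a common neighbour of the triangle $\{u,v,s\}$, of which there are at most two, so $s$ has at most two neighbours in $S$ and hence $\deg s\le 2+2+2=6$ (the last two from possible edges to $u'$ and $v'$). Applying the same bound to the triple $\{u,v,u'\}$ shows $u'$ has at most two neighbours in $S$, so $\deg u'\le 4$, and likewise $\deg v'\le 4$; in particular every vertex of $R$ has degree in $\{3,4,5,6\}$, and a degree-$6$ vertex must be adjacent to both $u'$ and $v'$, forcing $n_6\le 2$, where $n_k$ denotes the number of degree-$k$ vertices in $R$. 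Combining $\deg u=\deg v=p-2$ with $|E|\le 3p-6$ gives $\sum_k(k-4)n_k\le 0$, i.e.\ $n_3\ge n_5+2n_6$, and since $n_3-n_5-2n_6=2\bigl((3p-6)-|E|\bigr)$ is twice the nonnegative integer $(3p-6)-|E|=\sum_f(\ell_f-3)$, this quantity is automatically even. I would then finish by reading the planar embedding directly: the triangles $\{u,v,s\}$ assemble into a single fan around the edge $uv$, and $u',v'$ can attach only near the two ends of this fan, which both bounds the failure-to-triangulate defect and leaves only finitely many attachment patterns; reading off their degree sequences should produce precisely $\sigma_1,\dots,\sigma_9$ and $\sigma_{12}$.

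The main obstacle is exactly this last step. The degree inequalities above, together with the parity constraint, are still satisfied by sequences absent from Table~\ref{tab2} — for instance $p-2,p-2,6,6,4^{p-10},3^6$, which has $n_6=2$, $n_5=0$, $n_3=6$ and even defect — so counting alone cannot finish the proof. Ruling these out requires the structural description of how the fan of common neighbours closes up and how $u'$ and $v'$ insert into its end faces; controlling the interaction between the (at most two) degree-$6$ vertices and the at-most-two $S$-neighbours available to each of $u'$ and $v'$ is where the real work lies. This is also the step that must be paired with explicit constructions, in order to upgrade the upper bound into the stated classification by confirming that each listed sequence is genuinely realised by a polyhedron.
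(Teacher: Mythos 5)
There is a genuine gap, and you have identified it yourself: the case $u\sim v$ with $u'\neq v'$ is where essentially all of Proposition \ref{theprop} lives, and your argument stops short of it. Your treatment of the two easy cases is correct and matches the paper (non-adjacent: $G-u-v$ has maximum degree $2$ by the $K_{3,3}$ obstruction and is connected by $3$-connectedness, hence a Hamiltonian path or cycle, giving $\sigma_{11}$ and $\beta$; and your explicit exclusion of $u'=v'$ is a point the paper passes over silently). Your degree bounds in the main case are also correct: $\deg s\le 6$ for common neighbours, $\deg u',\deg v'\le 4$, $n_6\le 2$, and the counting identity $n_3-n_5-2n_6=2\bigl((3p-6)-|E|\bigr)\ge 0$. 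But, as your own counterexample $p-2,p-2,6,6,4^{p-10},3^6$ shows, these necessary conditions admit sequences not in Table \ref{tab2}, so they cannot yield the classification. What is missing is precisely the paper's structural enumeration: it builds the explicit ``double fan'' subgraph $G'$ on $\{u,v\}\cup S$ (a cycle through $x,v_1,\dots,v_{p-4}$ with $y$ joined to all of them and $x$ joined to the interior $v_i$), observes that all faces of $G'$ are triangles $X_i$ (incident to $x$) and $Y_j$ (incident to $y$) plus the two triangles on the edge $xy$, and then exhaustively places $a$ and $b$ either inside these faces or on the outer cycle via edge subdivision (the paper's Assumption \ref{as} failing), with subcases indexed by the positions $i,j$. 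That case analysis is what produces $\sigma_1,\dots,\sigma_9,\sigma_{12}$ and, crucially, excludes everything else.

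Two further points on your sketch of that missing step. First, ``$u'$ and $v'$ can attach only near the two ends of the fan'' is not the right picture: in the paper's analysis $a$ may sit inside \emph{any} face $X_i$ and $b$ inside any $Y_j$ (their positions $i,j$ range over $1,\dots,p-5$, which is exactly what generates the different sequences $\sigma_1,\sigma_2,\sigma_4,\sigma_5$ and the non-unigraphic multiplicities used later), and additionally one or both of them may lie \emph{on} the outer cycle rather than inside a face. Second, your predicted output list omits that $\sigma_{11}$ also reappears in the adjacent case; this does not affect the union of sequences in the proposition, but it is the mechanism by which the paper later shows $\sigma_{11}$ is not unigraphic. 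So the proposal is a correct and somewhat more quantitative setup, but the classification itself is not proved.
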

The present section is dedicated to proving Proposition \ref{theprop} and Theorem \ref{thm2}. Note that in $\sigma_1,\sigma_5,\sigma_7$ we have $p\geq 8$, in $\sigma_2$ we have $p\geq 9$, and in $\sigma_4$ we have $p\geq 10$. In all other cases in Table \ref{tab2}, the sequence is defined for $p\geq 7$.

In this section we will denote by $x,y$ two distinct vertices of degree $p-2$ in the polyhedron $G$, and by $H:=G-x-y$ the connected graph obtained by removing $x,y$ from $G$. The first distinction to make is whether $x$ and $y$ are adjacent.

\subsection{$x$ and $y$ are not adjacent}\label{x y not adj section}
\label{sec:ca1}

In this case, we know that every vertex $h$ of $H$ is adjacent to both $x$ and $y$ in $G$.

\begin{lemma}
	If $xy\not\in E(G)$, then $H$ is either a path or a cycle.
\end{lemma}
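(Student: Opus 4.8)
The plan is to exploit the degree constraints forced on $H$ by the hypothesis $xy \notin E(G)$. Since $x$ and $y$ each have degree $p-2$ in $G$ and there are exactly $p-2$ other vertices besides $x,y$, each of $x,y$ must be adjacent to \emph{every} vertex of $H$. Consequently every vertex $h \in V(H)$ already uses two of its edges to connect to $x$ and $y$. The key observation is that $G$ is a polyhedron, hence planar: so I would first argue that $H$ must itself be planar (being a subgraph of $G$) and that adjoining $x$ and $y$, each joined to all of $H$, imposes a severe planarity restriction. Indeed, if $H$ contained two vertices $h_1,h_2$ of degree $\geq 3$ in $H$ (i.e. degree $\geq 5$ in $G$), one could try to extract a $K_{3,3}$ or $K_5$ subdivision using $x,y$ together with the branch structure of $H$, contradicting Kuratowski's criterion quoted in the introduction.

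The cleanest route, which I would pursue first, is to bound the maximum degree of $H$. I claim every vertex of $H$ has degree at most $2$ inside $H$. Suppose instead some $h_0$ has three neighbours $a,b,c$ in $H$. Then $\{x,y,h_0\}$ and $\{a,b,c\}$ would give the two sides of a $K_{3,3}$ (each of $x,y$ adjacent to all of $H \supseteq \{a,b,c\}$, and $h_0$ adjacent to $a,b,c$ by assumption), forcing a $K_{3,3}$ subdivision in $G$ and contradicting planarity. Hence $\Delta(H) \leq 2$, so every connected component of $H$ is a path or a cycle. The remaining work is to rule out disconnected possibilities and the degenerate small cases: since $G$ is $3$-connected, $H = G - x - y$ must be connected (removing two vertices from a $3$-connected graph leaves a connected graph), so $H$ is a single path or a single cycle.

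The main obstacle I anticipate is handling the low-degree vertices of $H$ and the endpoints carefully. A vertex of degree $0$ or $1$ in $H$ is fine for the ``path'' conclusion (endpoints of a path), but I must confirm these cannot create a separating pair contradicting $3$-connectivity, and that the case where $H$ is a single vertex or a single edge is absorbed into the ``path'' terminology for small $p$. The $K_{3,3}$ extraction also needs the three neighbours $a,b,c$ to be genuinely distinct and distinct from $x,y$, which holds because $H$ is a simple graph on vertices disjoint from $\{x,y\}$; I would spell out that the six vertices $x,y,h_0,a,b,c$ are pairwise distinct so the complete bipartite subgraph is honest. Once $\Delta(H)\le 2$ is established and connectivity is invoked, the dichotomy ``path or cycle'' follows immediately, completing the proof.
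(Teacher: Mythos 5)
Your proposal is correct and follows essentially the same route as the paper: the same $K_{3,3}$ obstruction (with parts $\{x,y,h_0\}$ and the three neighbours of $h_0$) shows $\Delta(H)\le 2$, and connectivity of $H=G-x-y$ (from $3$-connectedness of $G$) then forces $H$ to be a single path or cycle. The paper phrases the final step as an edge count ($p-3$ or $p-2$ edges), whereas you invoke the standard classification of connected graphs of maximum degree two, but these are interchangeable.
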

\begin{proof}
	Let $h_1,h_2,...,h_{p-2}$ be the vertices of $H$. By connectivity, none of them are isolated in $H$. Moreover, we have 
	\begin{equation}
	\label{eqn:rel}
	\deg_H(h_i)\leq 2 \qquad 1\leq i\leq p-2
	\end{equation}
	due to planarity. Indeed, if $h_1$ were adjacent to $h_2,h_3,h_4$, say, then $G$ would contain a copy of $K_{3,3}$, with partition  $\{x,y,h_1\}$ and $\{h_2,h_3,h_4\}$, contradicting Kuratowski's Theorem. It now follows that $H$ has either $p-3$ or $p-2$ edges, the lower bound coming from connectivity and the upper bound from \eqref{eqn:rel}. If the size of $H$ is $p-3$ then $H$ is simply a path. If instead the size of $H$ is $p-2$, then $H$ contains a cycle, and in fact by \eqref{eqn:rel} $H$ is a cycle.
\end{proof}

To complete the analysis of the case $xy\not\in E(G)$, 
if $H$ is a path then the degree sequence of $G$ is $\sigma_{11}$ from Table \ref{tab2}.
A priori, $\sigma_{11}$ is a unigraphic candidate, however, later we will build a graph of same degree sequence, but where $x$ and $y$ are adjacent: this will suffice to prove that $\sigma_{11}$ is not unigraphic. If $H$ is a cycle, then $G$ is simply a $(p-2)$-gonal bipyramid, of sequence $\beta$.

\subsection{$x$ and $y$ are adjacent}
\label{sec:ca2}
Let $a$ be the only vertex not adjacent to $y$, and $b$ the only vertex not adjacent to $x$. Let $F=G-y$, and $W=V(F)\setminus \{a\}$. In $G$, $y$ is adjacent to every vertex of $W$, therefore by planarity, these are all on the boundary of the same face in $F$. In other words, the elements of $W$ lie on a cycle $C$ of $F$. Similarly, one defines $F'=G-x$, and $W'=V(F')\setminus \{b\}$, to see that the elements of $W'$ lie on a cycle $C'$ of $F'$. Using these notations, we define the following assumption.

\begin{ass}
	\label{as}
	In $G$ these exists a cycle $C$ containing every element of $W$ but not $a$, and moreover there exists a cycle $C'$ containing every element of $W'$ but not $b$.
\end{ass}

We proceed to analyse in detail the two cases of this assumption being verified or not.


\subsubsection{Assumption \ref{as} is verified}
\label{sec:asve}
We utilise the vertex labelling
\[V(G)=\{x,y,a,b,v_1,v_2,\dots,v_{p-4}\}.\]
To begin the construction of $G$, we draw the cycle $C$ containing for the moment the vertices
\[x,v_1,v_2,\dots,v_{p-4}\]
in order, and add the edges $xy$,
\[yv_i, \quad 1\leq i\leq p-4,\]
and
\[xv_i, \quad 2\leq i\leq p-5.\]
The resulting (planar) graph, of sequence $p-3,p-3,4^{p-6},3,3$, will be denoted by $G'$. It is sketched in Figure \ref{fig:XYZ}.

\begin{figure}[h!]
	\centering
	\includegraphics[width=7cm]{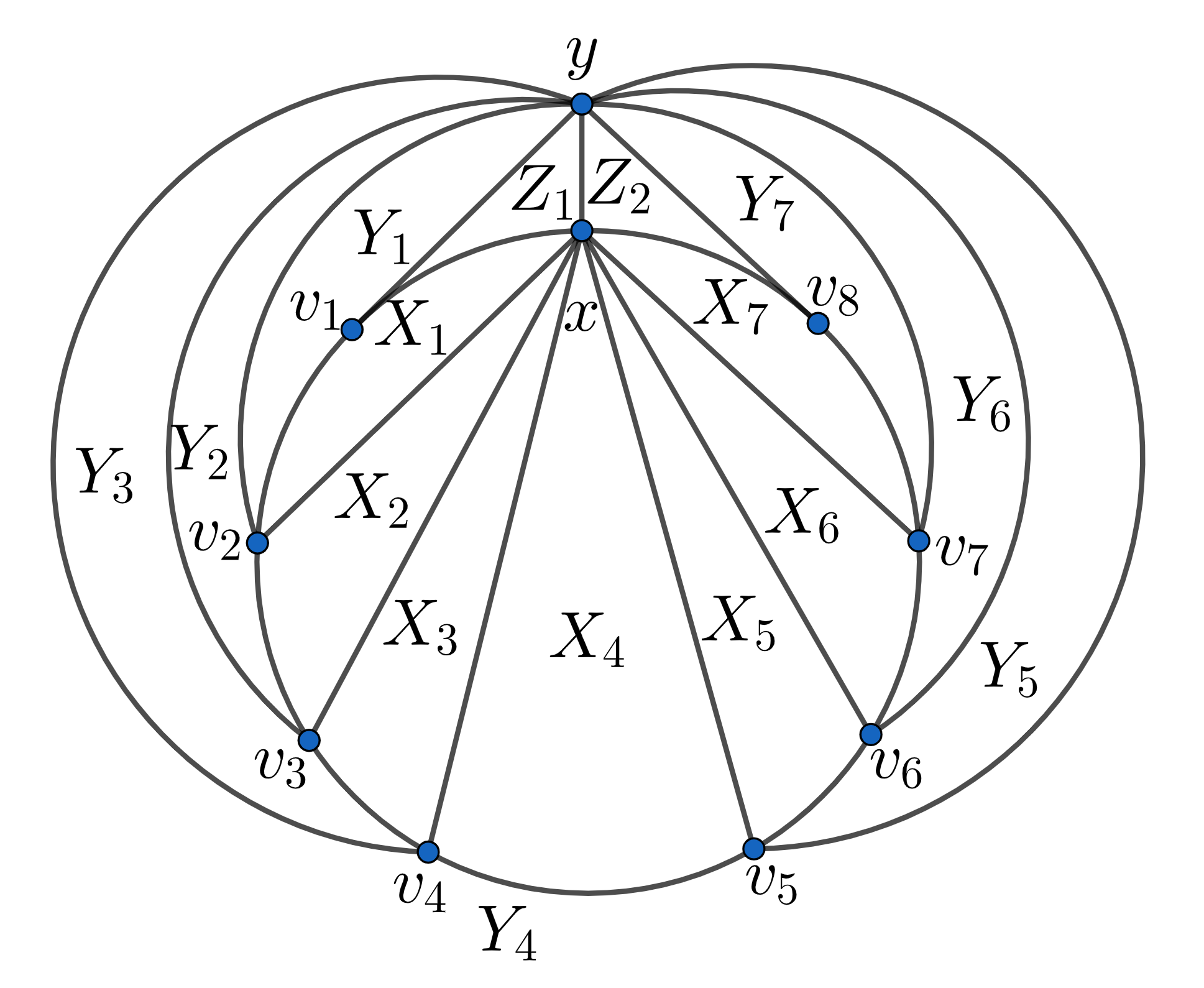}
	\caption{The subgraph $G'$ of $G$, for $p=12$.}
	\label{fig:XYZ}
\end{figure}

To recover $G$, it remains to insert $a,b$ and their incident edges. The notation $[u_1,u_2,\dots,u_n]$ indicates an $n$-gonal region of a planar graph, bounded by the cycle $u_1,u_2,\dots,u_n$. The regions of $G'$ are all triangular, and we will write
\[
Z_1=[x,y,v_1], \qquad
Z_2=[x,y,v_{p-4}]
\]
and, for $1\leq i\leq p-5$,
\[
X_i=[x,v_i,v_{i+1}], \qquad Y_i=[y,v_i,v_{i+1}].
\]

If $a,b$ are adjacent, then they must lie inside the same region of $G'$, and moreover, this region must be one of the two with boundary including the edge $xy$ (either $Z_1$ or $Z_2$). Up to isomorphism,  
there is thus only one choice, i.e. this region is the triangle $Z_1$. Since $a,b$ have degree at least three, we obtain a unique graph in this case, of sequence $\sigma_9$.

For the rest of section \ref{sec:asve}, we will assume that $ab\not\in E(G)$. Using planarity, together with the fact that $a,b$ are of degree at least three, we deduce that $a$ lies inside a triangle $X_i$ of $G'$, and $b$ inside $Y_j$, where $1\leq i,j\leq p-5$. There are two cases.
\begin{enumerate}
	\item
	At least one of $i,j$ equals $1$, say $i=1$. There are a few subcases, and the resulting sequences are listed in Table \ref{tab3}. Due to symmetries, this also covers the case when at least one of $i,j$ equals $p-5$.

	\begin{table}[h!]
		\centering
		$\begin{array}{|l|l|l|}
		\hline
		\text{Subcase}&\text{Sequences}
		\\\hline
		i=j=1
		&\sigma_3
		\\\hline
		i=1, \ j=2
		&\sigma_3
		\\\hline
		i=1, \ j=p-5
		&\sigma_6
		\\\hline
		i=1, \ j\neq 1,2,p-5
		&\sigma_5
		\\\hline
		\end{array}$
		\caption{Assumption \ref{as} holds, $a$ lies inside $X_1$, and $b$ inside $Y_j$, for some $1\leq j\leq p-5$.}
		\label{tab3}
	\end{table}

	\begin{remark}
		\label{rem:1}
		For $j\neq 1,2,p-5$ to make sense, we must have $p\geq 8$. We find the exceptional sequences
		\[\sigma_5(8)=6, 6, 6, 5, 4, 3, 3, 3 \qquad \sigma_5(9)=7, 7, 5, 5, 5, 4, 3, 3, 3\]
		listed in Table \ref{tab0}. We record that if $p\geq 10$, then  $\sigma_5(p)$ is not unigraphic.
	\end{remark}
	
	\item
	$i,j\neq 1,p-5$. For this to make sense, we must have $p\geq 8$. As above, we fix $i$ and collect the resulting subcases in Table \ref{tab4}.
	
	\begin{table}[h!]
		\centering
		$\begin{array}{|l|l|l|}
		\hline
		\text{Subcase}&\text{Sequences}
		\\\hline
		j=i
		&\sigma_1
		\\\hline
		|i-j|=1
		&\sigma_2
		\\\hline
		|i-j|\geq 2
		&\sigma_4
		\\\hline
		\end{array}$
		\caption{Assumption \ref{as} holds, $a$ lies inside $X_i$, and $b$ inside $Y_j$, for some $i,j\neq 1,p-5$.}
		\label{tab4}
	\end{table}

\begin{remark}
	\label{rem:2}
	Similarly to Remark \ref{rem:1}, we have
	\begin{equation}
	\label{eqn:exc}
	\sigma_1(8)=6,6,6,6,3,3,3,3, \qquad \sigma_1(9)=7,7,6,6,4,3,3,3,3.
	\end{equation}
	These are two of the exceptional entries in Table \ref{tab0}. For $p\geq 10$, there are here enough choices for $i,j$ to produce at least two non-isomorphic polyhedra of sequence $\sigma_1(p)$.
	\\
	Similarly, $|i-j|\geq 1$ implies $p\geq 9$, and we find \[\sigma_2(9)=7, 7, 6, 5, 5, 3, 3, 3, 3, \qquad \sigma_2(10)=8, 8, 6, 5, 5, 4, 3, 3, 3, 3,\] and if $p\geq 11$, then $\sigma_2(p)$ is not unigraphic.
	\\
	If $|i-j|\geq 2$ then $p\geq 10$, and we find \[\sigma_4(10)=8,8,5,5,5,5,3,3,3,3,\] and if $p\geq 11$, then $\sigma_4(p)$ is not unigraphic.
\end{remark}
\end{enumerate}

\subsubsection{Assumption \ref{as} is not verified}
Here either $b$ may be inserted into $G'$ via an elementary subdivision of the edge $v_iv_{i+1}$, for some $1\leq i\leq p-5$, or $a$ may be inserted into $G'$ via an elementary subdivision of the edge $v_jv_{j+1}$, for some $1\leq j\leq p-5$, or both. With no loss of generality, assume the former. Note that we can also suppose that $v_iv_{i+1}\not\in E(G)$, otherwise we would be in the already studied case of $b$ lying in a triangle of $G'$. There are three cases.
\begin{enumerate}
	\item
	$a,b$ lie on the cycle $C$ between the same $v_i$ and $v_{i+1}$. The order in which $a,b$ appear in $C$ is irrelevant by symmetry, so let's say that
	\[v_ia, \ ab, \ bv_{i+1}\in E(G).\]
	We summarise the subcases in Table \ref{tab5}.
	\begin{table}[h!]
		\centering
		$\begin{array}{|l|l|l|}
		\hline
		\text{}&i=1,p-5&i\neq 1,p-5
		\\\hline
		av_{i+1}, \ bv_{i}\not\in E(G)
		&\sigma_{12}&\sigma_{12}
		\\\hline
		\text{Exactly one of }av_{i+1}\ bv_{i} \text{ is an edge in } G
		&\sigma_{11}&\sigma_{9}
		\\\hline
		av_{i+1}, \ bv_{i}\in E(G)
		&\alpha&\sigma_6
		\\\hline
		\end{array}$
		\caption{Assumption \ref{as} does not hold, $a,b$ lie on the cycle $C$ between the same $v_i$ and $v_{i+1}$, for some $1\leq i\leq p-5$.}
		\label{tab5}
	\end{table}
	An example may be found in Figure \ref{fig:s9}.
	\begin{figure}[h!]
		\centering
		\includegraphics[width=7cm]{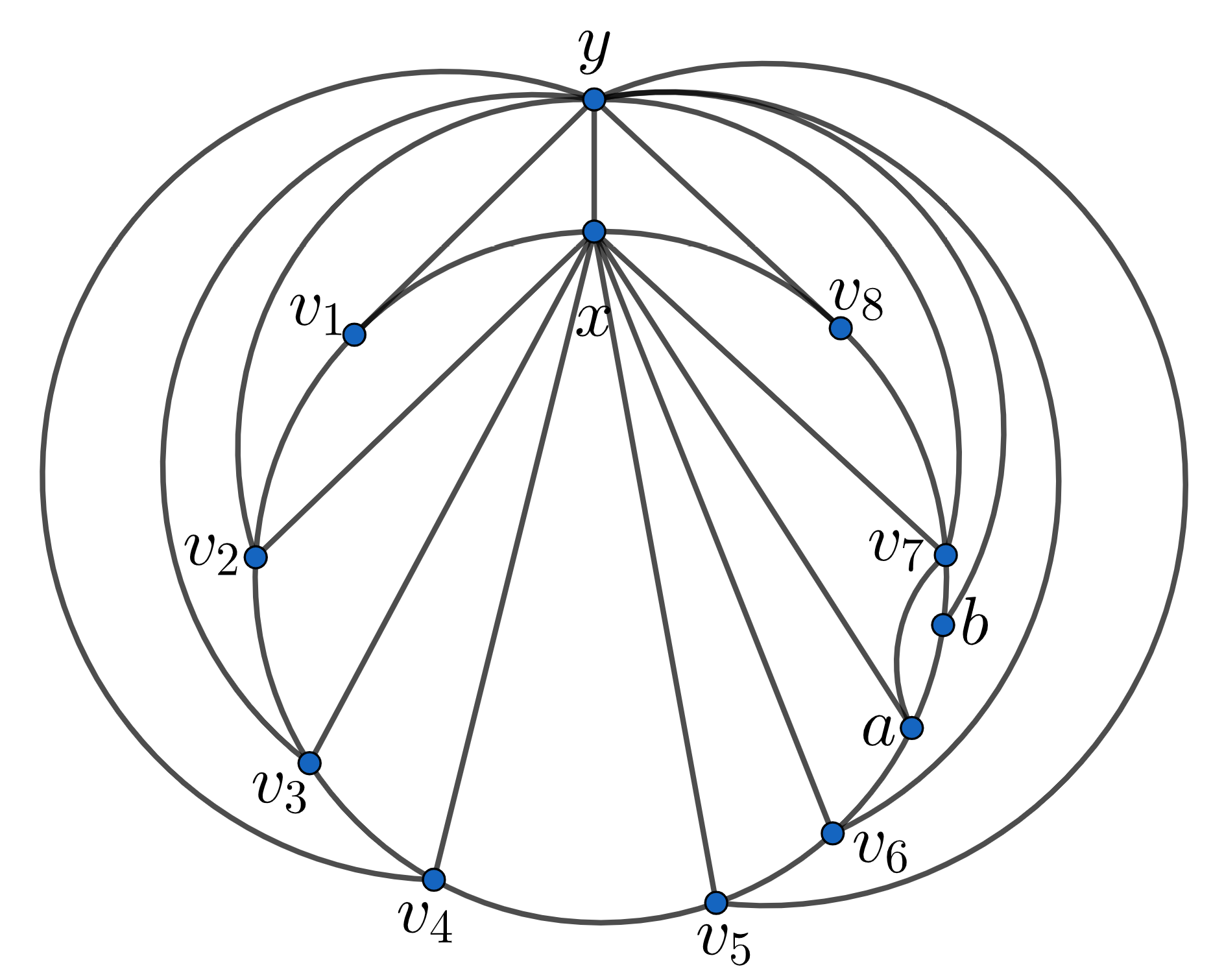}
		\caption{For $p=12$, the case of Assumption \ref{as} not verified, $a,b$ lying on $C$ between $v_6$ and $v_7$, $av_7\in E(G)$, and $bv_6\not\in E(G)$. The resulting sequence is $\sigma_9(12)=10,10,5,4^6,3,3,3$.}
		\label{fig:s9}
	\end{figure}
	\item
	$b$ lies on the cycle $C$ between $v_i$ and $v_{i+1}$, and $a$ lies on the cycle $C$ between $v_j$ and $v_{j+1}$, with $i\neq j$. Here necessarily $\deg_G{a}=\deg_G{b}=3$, and we obtain $\sigma_{12}$.
	\item
	$b$ lies on the cycle $C$ between $v_i$ and $v_{i+1}$, and $a$ lies inside the triangle $X_j$ of $G'$, for some $1\leq i,j\leq p-5$. It is important to remark that necessarily $ab\not\in E(G)$ even if $i=j$. Indeed, if $i=j$ and $ab\in E(G)$, then either all of $v_i b, \ ba, \ av_{i+1}$ are edges, or all of $v_i a, \ ab, \ bv_{i+1}$ are edges. Thereby, in any case, by redefining $C$ we fall back into the already inspected scenario of $a,b$ both lying on the cycle between $v_i,v_{i+1}$. Here we obtain either $\sigma_9$ or $\sigma_7$, depending on whether $j=1,p-5$ or not.
	\begin{remark}
		\label{rem:3}
	In case $j\neq 1,p-5$, we must have $p\geq 8$. The sequence $\sigma_7(p)$ is not unigraphic for any value of $p$. For instance, if $p=8$, the choice of $j=2$ is unique, but the different choices of $i=1,2$ produce non-isomorphic graphs of sequence $\sigma_7(8)$.
	\end{remark}
\end{enumerate}
Having studied all cases, Proposition \ref{theprop} is hence proven. We now turn to completing the proof of Theorem \ref{thm2}.

\subsection{Completing the Proof of Theorem \ref{thm2}}
We inspect which of the sequences listed in Proposition \ref{theprop} appear more than once throughout the various cases in sections \ref{sec:ca1} and \ref{sec:ca2}. Each of $\sigma_3,\sigma_6,\sigma_9,\sigma_{11},\sigma_{12}$ appears at least twice in mutually exclusive cases in the construction of $G$.

As for $\sigma_1,\sigma_2,\sigma_4,\sigma_5,\sigma_7$, we have already noted in Remarks \ref{rem:1}, \ref{rem:2}, and \ref{rem:3} that these collectively produce the seven exceptional entries in Table \ref{tab0} for $p\geq 7$ (two each for $p=8$ and $p=10$, and three for $p=9$). We had also recorded that apart from these seven exceptions, $\sigma_1,\sigma_2,\sigma_4,\sigma_5,\sigma_7$ are not unigraphic.

On the other hand, the sequences $\sigma_8=\alpha$ and $\sigma_{10}=\beta$ are unigraphic, since each appears only once in the cases for the construction of $G$, and since all scenarios for sequences starting with $p-2,p-2$ were analysed while proving Proposition \ref{theprop}. The proof of Theorem \ref{thm2} is complete.

\section{Proof of Theorem \ref{thm1}}
\label{sec3}
In this section we will use the following notation.
\begin{itemize}
	\item
	The letter $G$ will denote a polyhedron, $y$ a vertex of degree $p-2$, $a$ the only vertex not adjacent to $y$, and $c$ the only vertex of degree $3$.
	\item
	We also set $F:=G-y$, the 2-connected planar graph obtained by removing $y$ from $G$. 
	Every figure will be a sketch of $F$, as $F$ uniquely determines $G$. We denote by $V$ the vertex set of $F$, and $W:=V\backslash \{a\}$.
\end{itemize}
\subsection{Setup}

\begin{lemma}\label{bigcycle}
	There exists in $F$ one cycle $C$ containing every vertex of $W$, and not containing $a$. Moreover, $a\neq c$.
\end{lemma}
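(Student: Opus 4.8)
Looking at Lemma~\ref{bigcycle}, the plan is to establish two facts about the graph $F = G - y$: first, that the $p-2$ vertices adjacent to $y$ in $G$ (namely the elements of $W = V \setminus \{a\}$) all lie on a common cycle of $F$ that avoids $a$; and second, that the unique degree-$3$ vertex $c$ of $G$ is distinct from $a$.

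For the cycle statement, I would argue exactly as in the opening of section~\ref{sec:ca2}. In $G$, the vertex $y$ has degree $p-2$, so it is adjacent to precisely the vertices of $W$ and not to $a$. When we delete $y$, the edges from $y$ to the $p-2$ vertices of $W$ all emanated from a single vertex in the planar embedding, so the elements of $W$ must all lie on the boundary of a single face of $F$. Since $G$ is $3$-connected (hence $F$ is $2$-connected, as removing one vertex from a $3$-connected graph preserves $2$-connectivity), every face boundary of $F$ is a cycle, and this face boundary is the desired cycle $C$ through all of $W$. Because $a \notin W$ and $y$ was not adjacent to $a$, the vertex $a$ does not sit on this face boundary, so $C$ does not contain $a$. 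This is essentially the face-boundary argument already used to justify Assumption~\ref{as}, so it should go through cleanly.

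The step I expect to require a genuine separate argument is the claim $a \neq c$, i.e.\ that the vertex not adjacent to $y$ is \emph{not} the unique vertex of degree $3$. The natural approach is by contradiction: suppose $a = c$, so the single degree-$3$ vertex is exactly the one vertex missing an edge to $y$. Then in $F = G - y$, every vertex of $W$ lost exactly one incident edge (its edge to $y$), while $a = c$ lost none. Since $\deg_G(a) = 3$, in $F$ the vertex $a$ still has degree $3$; meanwhile each $w \in W$ has $\deg_F(w) = \deg_G(w) - 1 \geq 3 - 1 = 2$, and in fact since $c$ is the \emph{only} degree-$3$ vertex of $G$, every $w \in W$ has $\deg_G(w) \geq 4$, hence $\deg_F(w) \geq 3$. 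I would then exploit the cycle $C$ from the first part together with a counting or planarity constraint to force a contradiction — the key tension being that $a$, with all three of its $G$-neighbours surviving in $F$, must attach to the cycle $C$ in a way incompatible with $3$-connectivity or with the degree budget (the handshake sum on $F$ must match the prescribed sequence $p-2, \dots$ with its single $3$ removed at $y$'s neighbours).

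The main obstacle will be pinning down this contradiction rigorously: I would need to check that locating $a = c$ inside or on the cycle $C$, with its three edges, either violates planarity (forcing a $K_{3,3}$ or $K_5$ minor together with $y$ and $C$) or contradicts the $2$-connectivity/degree constraints that the sequence imposes. A clean way to organise it is to observe that $a$ being both the non-neighbour of $y$ and of minimum degree $3$ puts $a$ in a very rigid position relative to $C$, and then to derive that $G$ would have a degree sequence not starting with $p-2$ and containing exactly one $3$, contradicting the standing hypotheses of the section. I would present the cycle argument first as the routine part, then devote the bulk of the proof to ruling out $a = c$.
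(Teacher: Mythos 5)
There are two genuine gaps, one in each half of your argument. For the first half, your face-boundary construction of $C$ is fine up to the last step, but the claim that ``because $a \notin W$ and $y$ was not adjacent to $a$, the vertex $a$ does not sit on this face boundary'' is a non sequitur: the face of $F$ in whose interior $y$ sat must have all of $W$ on its boundary, but nothing prevents $a$ from lying on that same boundary as well (consider, e.g., a Hamiltonian outerplanar $F$ with $y$ re-inserted adjacent to all vertices but one). The paper rules this out by contradiction: if $a \in C$ then $F$ is outerplanar, hence has at least two vertices of degree at most $2$, which forces at least two vertices of degree $3$ in $G$ (or a violation of $3$-connectivity), contradicting the standing hypothesis that $c$ is the \emph{unique} degree-$3$ vertex. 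This is precisely where that hypothesis is used; note that in Section~\ref{sec:ca2}, where no such hypothesis is available, the authors must explicitly treat the case in which no cycle through $W$ avoiding $a$ exists (Assumption~\ref{as} failing), so your shortcut cannot be repaired without invoking the degree condition.

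For the second half ($a \neq c$), you do not actually produce a proof: you set up the degree bookkeeping correctly ($\deg_F(w) \geq 3$ for $w \in W$ when $a = c$) but then only describe where a contradiction ``should'' come from and concede that pinning it down is the main obstacle. The paper's argument is short and again rests on outerplanarity: writing $v_1,v_2,v_3$ for the neighbours of $a$, the graph $F - v_3 - a + v_1v_2$ is outerplanar, so it has a vertex $v'$ of degree $2$ other than the (now adjacent) pair $v_1,v_2$; such a $v'$ yields $\deg_G(v') = 3$, a second degree-$3$ vertex besides $a = c$, contradiction. Your suggested routes (a $K_{3,3}$ or $K_5$ obstruction, or a handshake count on $F$) are not developed and, as stated, would not obviously close the argument. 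In short, both halves of the lemma are settled in the paper by the same device --- outerplanar graphs have two vertices of small degree, hence produce too many degree-$3$ vertices in $G$ --- and this idea is missing from your proposal.
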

\begin{proof}
In $G$, $y$ is adjacent to every vertex of $W$, therefore by planarity, these are all on the boundary of the same face in $F$. Now by contradiction let $a\in C$. The graph $F$ is then outerplanar, i.e. planar with all vertices lying on the boundary of one region. But then there would be at least two vertices of degree $3$ in $G$, contradiction.

To prove the last statement, assume by contradiction that $a=c$. Recall that $p\geq 7$.  
Letting $v_1,v_2,v_3$ be the three vertices adjacent to $a$, we note that $F-v_3-a+v_1v_2$ is outerplanar, and thus has a vertex $v'$ of degree $2$ other than $v_1$ and $v_2$ (since $v_1$ and $v_2$ are adjacent in this graph). But then $\deg_G(v')=3$, contradiction.
\end{proof}

In particular,
\[\deg_G(a)\geq 4.\]

Let
\[
R:=F-E(C)-c=G-y-E(C)-c,
\]
where $E(C)$ is the edge set of the cycle $C$. Note that it suffices to study $R$, as this graph uniquely determines $F$ and hence the polyhedron $G$.

\begin{lemma}\label{max 1 triangle}
	Either $R$ is a forest, or $R$ contains exactly one cycle, which is a triangle containing the vertex $a$.
\end{lemma}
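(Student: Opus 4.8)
The plan is to work entirely inside the cycle $C$. Recall from Lemma \ref{bigcycle} that $C$ carries all of $W$ and bounds the face of $F$ formerly occupied by $y$; hence every edge of $F$ outside $C$ — either a \emph{chord} joining two vertices of $W$, or a \emph{spoke} joining $a$ to a vertex of $W$ — is drawn inside $C$ without crossings. Since $\deg_G(c)=3$ and $c$ is adjacent to $y$, the vertex $c$ lies on $C$ with both its $F$-edges on $C$; thus $c$ carries no chord and no spoke, $a\not\sim c$, and $c$ is isolated in $F-E(C)$, so it simply disappears in $R$. Every other vertex of $W$ has $\deg_F\geq 3$, and $\deg_F(a)=\deg_G(a)\geq 4$; in particular the only vertex of $F$ of degree $2$ is $c$. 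The workhorse will be an elementary convexity fact: if $q_0,\dots,q_n$ sit in convex position with the arc edges $q_iq_{i+1}$ present and carry a nonempty family of pairwise non-crossing chords, then some interior $q_j$ ($0<j<n$) is incident to no chord. Indeed, a chord $q_pq_{p'}$ of minimal span $p'-p\,(\geq 2)$ traps every $q_l$ with $p<l<p'$, and such a $q_l$ can carry no chord without crossing $q_pq_{p'}$ or contradicting minimality. If moreover these trapped vertices have no further incident $F$-edges (no spokes, by planarity), the chord-free one has $\deg_F=2$, hence \emph{equals $c$}.

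First I would show that $T:=R-a$ is a forest, i.e. every cycle of $R$ passes through $a$. A cycle avoiding $a$ uses only chords, so its vertices, being in convex position, form the boundary polygon $z_1\cdots z_r$ ($r\geq 3$) traversed in cyclic order. Each edge $z_iz_{i+1}$ is a genuine chord and thus cuts off a pocket containing at least one vertex of $C$; the vertex $a$ lies in at most one pocket (or strictly inside the polygon), so at least $r-1\geq 2$ pockets are free of $a$. In each $a$-free pocket the trapped vertices form a convex sub-arc with non-crossing chords, whose incident $F$-edges all lie inside the pocket; by the workhorse each such pocket forces a vertex of degree $2$, i.e. a copy of $c$. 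Two disjoint pockets would force two distinct degree-$2$ vertices, contradicting the uniqueness of $c$.

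Next I would bound the number of independent cycles. Write $s=\deg_R(a)=\deg_F(a)\geq 4$ and list the neighbours $z_1,\dots,z_s$ of $a$ in cyclic order; their spokes cut the interior of $C$ into $s$ sectors. A sector containing an interior arc-vertex forces (by the workhorse, including the sub-case of no chords at all, in which every interior arc-vertex has degree $2$) a trapped degree-$2$ vertex, and distinct sectors force distinct such vertices; by uniqueness of $c$ \emph{at most one sector is nontrivial}. Every chord therefore lies in that single sector, so in the forest $T$ the only neighbours of $a$ that can share a component are the two endpoints $z_{i_0},z_{i_0+1}$ of that sector, the other $s-2$ neighbours being isolated. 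A direct count shows the number of independent cycles of $R$ equals $s-m$, where $m$ is the number of components of $T$ met by $N(a)$; since $m\geq s-1$, this number is at most $1$.

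Finally, if $R$ is not a forest it has a unique cycle, which by the first step passes through $a$ and consists of two spokes together with a chord-path $P$ from $z_{i_0}$ to $z_{i_0+1}$ inside the nontrivial sector. Were $P$ of length $\geq 2$, its first and last chords would cut off two disjoint nonempty sub-arcs, each forcing a degree-$2$ vertex by the workhorse — two copies of $c$ again, impossible. Hence $P$ is the single chord $z_{i_0}z_{i_0+1}$ and the unique cycle is the triangle $a\,z_{i_0}\,z_{i_0+1}$, which contains $a$, as required. The main obstacle is precisely the repeated conversion of the combinatorics of non-crossing chords into ``at most one ear'' statements via the uniqueness of $c$; the points demanding care are checking that each trapped interior vertex genuinely has $F$-degree exactly $2$ (no hidden spokes, by planarity) and that the several forced degree-$2$ vertices produced in each argument are genuinely distinct.
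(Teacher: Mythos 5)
Your proof is correct, and its engine is the same one the paper uses: the ``workhorse'' you isolate (a minimal-span chord traps a vertex with no further chords, hence of degree $2$ in $F$, hence equal to $c$) is exactly the outerplanarity/ear argument that the paper invokes with the phrase ``by the arguments in Lemma \ref{bigcycle}''. Where you differ is in the architecture. The paper argues directly: any cycle of length at least $4$ in $R$ has two consecutive edges missing $a$, i.e.\ two chords sharing a vertex, whose two pockets force two copies of $c$; and two triangles would both have to contain $a$, so their two opposite chords again force two copies of $c$. You instead prove three separate facts: every cycle of $R$ passes through $a$ (via the pockets of a convex polygon of chords), the cycle rank of $R$ is at most $1$ (via the sector decomposition by the spokes at $a$ plus the count $s-m$ with $m\geq s-1$), and the unique cycle, if any, is the triangle $a\,z_{i_0}\,z_{i_0+1}$ (via the two end-chords of the putative chord-path). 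The cycle-rank computation is genuinely not in the paper and buys you a clean, casework-free bound on the \emph{number} of cycles, at the cost of length; the paper's version is shorter because it exploits the single observation that any long cycle, through $a$ or not, contains two consecutive chord edges. Your write-up is also more careful on the points the paper glosses over, namely that the forced degree-$2$ vertices in distinct pockets are genuinely distinct and that trapped vertices carry no spokes.
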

\begin{proof}
	Let's suppose for contradiction that $R$ contains an $n$-gon with $n\geq 4$. Then at most two edges of this $n$-gon contain $a$, thus there are two consecutive edges $e_1,e_2$ that do not contain this vertex. By the arguments in Lemma \ref{bigcycle}, there are two vertices of degree $3$ in $G$, contradiction. Similarly, if $R$ contains two triangles, then $a$ belongs to both, so that again by Lemma \ref{bigcycle} (and planarity), we get more than one degree $3$ vertex in $G$.
\end{proof}

\subsection{First case: $R$ is a forest}
\label{sec:for}
We call $A$ the connected component of $a$ in $R$, and $P_1,...,P_k$ the other components. Now every forest degree sequence admits a realisation as a disjoint union of a caterpillar and a certain number of copies of $K_2$, cf. \cite{Maffucci 1}. Since $\deg_R(a)=\deg_G(a)\geq 4$, it follows that $A$ is a caterpillar.

\begin{notation}
	We write 
	\[
	\C(j_1,...,j_l)
	\]
	for the caterpillar with non-leaf vertices of degrees $j_1,...,j_l$ in order along its central path, where every $j_i\geq 2$.
	For $n\geq 1$, we write $S_n$ for the star on $n+1$ vertices, so that $S_n=\C(n)$ for every $n\geq 2$. The graph $K_1$ is the trivial caterpillar, and we use the convention $\C(\emptyset)=S_1=K_2$.
\end{notation}

For instance, the caterpillar $\C(3,4,3,4,2)$ is depicted in Figure \ref{fig:cat}.

\tikzset{every picture/.style={line width=0.75pt}} 
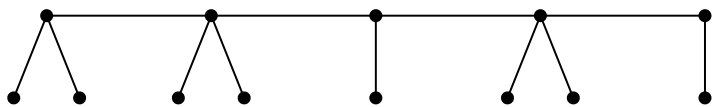
\begin{figure}[h!]
	\centering
	\begin{tikzpicture}[x=0.75pt,y=0.75pt,yscale=-0.83,xscale=0.83]
	
	\draw    (50,150) -- (150,150) ;
	\draw [shift={(150,150)}, rotate = 0] [color={rgb, 255:red, 0; green, 0; blue, 0 }  ][fill={rgb, 255:red, 0; green, 0; blue, 0 }  ][line width=0.75]      (0, 0) circle [x radius= 3.35, y radius= 3.35]   ;
	\draw [shift={(50,150)}, rotate = 0] [color={rgb, 255:red, 0; green, 0; blue, 0 }  ][fill={rgb, 255:red, 0; green, 0; blue, 0 }  ][line width=0.75]      (0, 0) circle [x radius= 3.35, y radius= 3.35]   ;
	\draw    (150,150) -- (250,150) ;
	\draw [shift={(250,150)}, rotate = 0] [color={rgb, 255:red, 0; green, 0; blue, 0 }  ][fill={rgb, 255:red, 0; green, 0; blue, 0 }  ][line width=0.75]      (0, 0) circle [x radius= 3.35, y radius= 3.35]   ;
	\draw    (250,150) -- (350,150) ;
	\draw [shift={(350,150)}, rotate = 0] [color={rgb, 255:red, 0; green, 0; blue, 0 }  ][fill={rgb, 255:red, 0; green, 0; blue, 0 }  ][line width=0.75]      (0, 0) circle [x radius= 3.35, y radius= 3.35]   ;
	\draw    (350,150) -- (450,150) ;
	\draw [shift={(450,150)}, rotate = 0] [color={rgb, 255:red, 0; green, 0; blue, 0 }  ][fill={rgb, 255:red, 0; green, 0; blue, 0 }  ][line width=0.75]      (0, 0) circle [x radius= 3.35, y radius= 3.35]   ;
	\draw    (50,150) -- (30,200) ;
	\draw [shift={(30,200)}, rotate = 111.8] [color={rgb, 255:red, 0; green, 0; blue, 0 }  ][fill={rgb, 255:red, 0; green, 0; blue, 0 }  ][line width=0.75]      (0, 0) circle [x radius= 3.35, y radius= 3.35]   ;
	\draw    (50,150) -- (70,200) ;
	\draw [shift={(70,200)}, rotate = 68.2] [color={rgb, 255:red, 0; green, 0; blue, 0 }  ][fill={rgb, 255:red, 0; green, 0; blue, 0 }  ][line width=0.75]      (0, 0) circle [x radius= 3.35, y radius= 3.35]   ;
	\draw    (150,150) -- (130,200) ;
	\draw [shift={(130,200)}, rotate = 111.8] [color={rgb, 255:red, 0; green, 0; blue, 0 }  ][fill={rgb, 255:red, 0; green, 0; blue, 0 }  ][line width=0.75]      (0, 0) circle [x radius= 3.35, y radius= 3.35]   ;
	\draw    (150,150) -- (170,200) ;
	\draw [shift={(170,200)}, rotate = 68.2] [color={rgb, 255:red, 0; green, 0; blue, 0 }  ][fill={rgb, 255:red, 0; green, 0; blue, 0 }  ][line width=0.75]      (0, 0) circle [x radius= 3.35, y radius= 3.35]   ;
	\draw    (250,150) -- (250,200) ;
	\draw [shift={(250,200)}, rotate = 90] [color={rgb, 255:red, 0; green, 0; blue, 0 }  ][fill={rgb, 255:red, 0; green, 0; blue, 0 }  ][line width=0.75]      (0, 0) circle [x radius= 3.35, y radius= 3.35]   ;
	\draw    (350,150) -- (330,200) ;
	\draw [shift={(330,200)}, rotate = 111.8] [color={rgb, 255:red, 0; green, 0; blue, 0 }  ][fill={rgb, 255:red, 0; green, 0; blue, 0 }  ][line width=0.75]      (0, 0) circle [x radius= 3.35, y radius= 3.35]   ;
	\draw    (350,150) -- (370,200) ;
	\draw [shift={(370,200)}, rotate = 68.2] [color={rgb, 255:red, 0; green, 0; blue, 0 }  ][fill={rgb, 255:red, 0; green, 0; blue, 0 }  ][line width=0.75]      (0, 0) circle [x radius= 3.35, y radius= 3.35]   ;
	\draw    (450,150) -- (450,200) ;
	\draw [shift={(450,200)}, rotate = 90] [color={rgb, 255:red, 0; green, 0; blue, 0 }  ][fill={rgb, 255:red, 0; green, 0; blue, 0 }  ][line width=0.75]      (0, 0) circle [x radius= 3.35, y radius= 3.35]   ;
	
	\end{tikzpicture}
	\caption{The caterpillar $\C(3,4,3,4,2)$.}
	\label{fig:cat}
\end{figure}

Back to the proof of Theorem \ref{thm1}, since $G$ is unigraphic, again by the arguments in \cite{Maffucci 1} either $k=0$, or the caterpillar is a star centred at $a$, or both (for instance, the graphs $\C(3,4,3,4,2)\cup K_2$ and $\C(3,4,3,4)\cup S_2$ share the same sequence). Let us inspect these cases in turn.

\begin{enumerate}
\item
Assume that $A$ is not a star, so that in particular $k=0$. Here $a$ is at one end of the central path in $A$ (otherwise there would be more than one vertex of degree $3$ in $G$). We can write
\[A=\C(m,j_2,\dots,j_l),\]
with
\[m:=\deg(a),\]
$l\geq 2$, and each of $j_2,\dots,j_l\geq 2$.

If $m\geq 5$, replacing $A$ with 
\[\C(4,j_2,\dots,j_l,m-3)\]
changes $G$ but not its sequence. We can rule out this case.

\begin{figure}[h!]
\centering
\includegraphics[width=6cm]{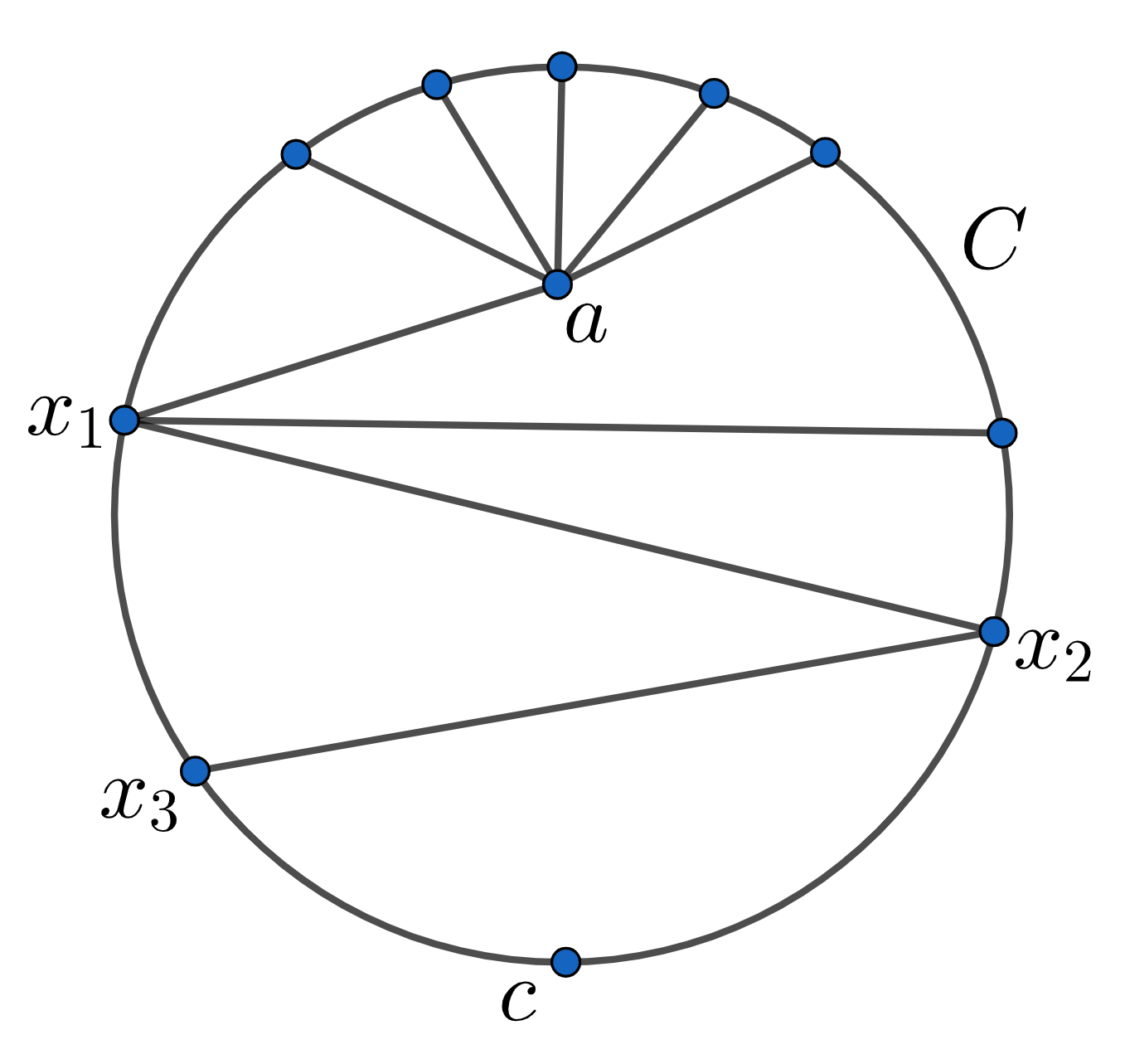}
\hspace{2cm}
\includegraphics[width=6cm]{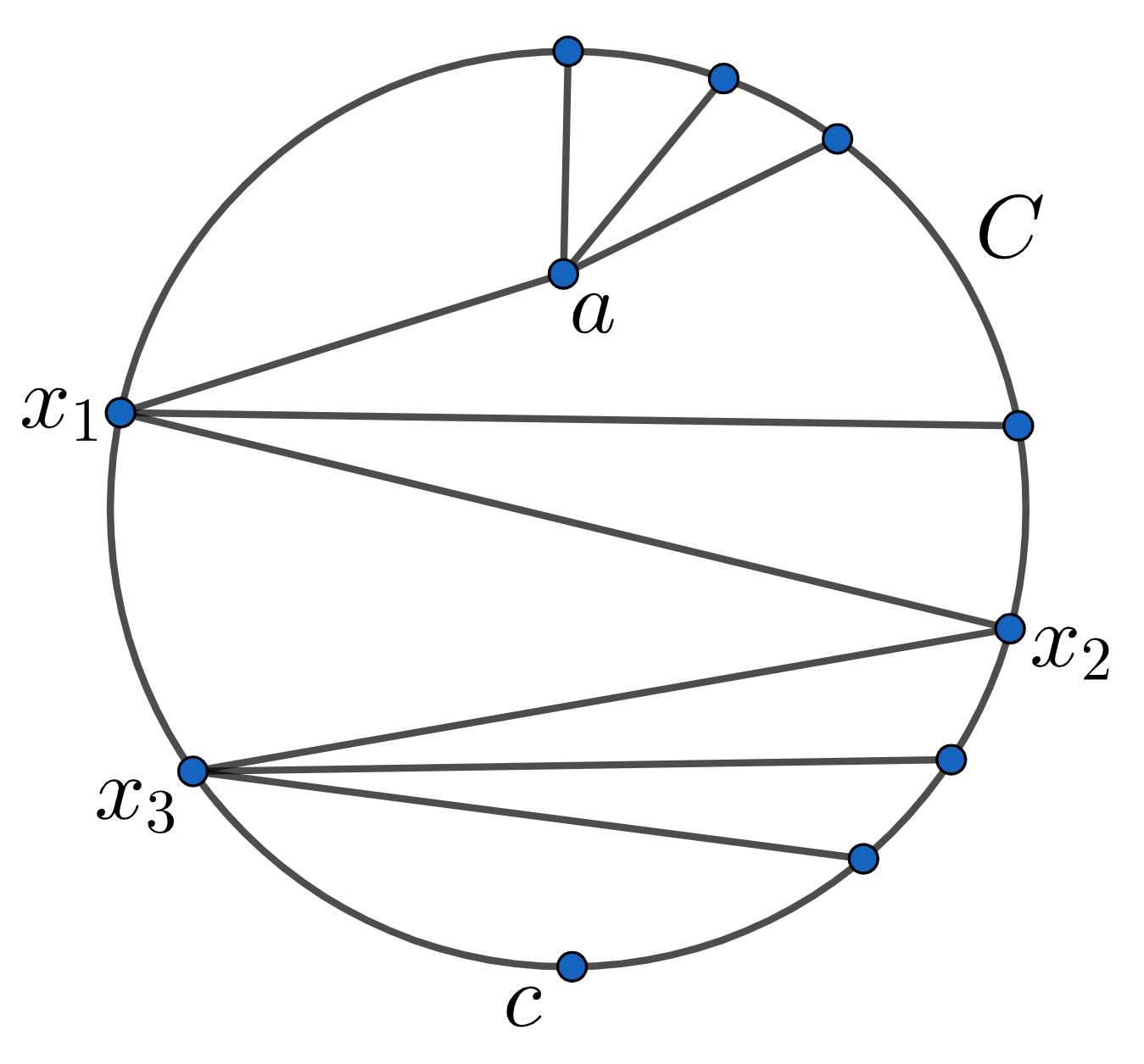}
\caption{Left: for $p=12$, the graph $F=G-y$ in the case of $R=A=\C(6,3,2)$ (so that $m=\deg(a)=6$). The sequence of $G$ is $10,6,6,5,4^7,3$. Right: the graph obtained by replacing $A$ with $\C(4,3,2,3)$. Note that the degrees of $a$ and $x_3$ have swapped.}
\label{fig:for1}
\end{figure}

If instead $m=4$, we look at $j_l$. If $j_l\geq 3$, we replace $A$ with
\[\C(5,j_2,\dots,j_{l-1},2).\]
If $j_l=2$, we replace $A$ with
\[\C(5,j_2,\dots,j_{l-1}).\]
In any case, we have changed $G$ but not its sequence.

\item
We are left with the case of $A$ being a star with central vertex $a$, and $P_1=\dots=P_k=K_2$ with $k\geq 0$. The degree sequence of $G$ is thus
\begin{equation}
\label{l1}
p-2,m,4^{(m+2k)},3.
\end{equation}

If $m\geq 5$, it is easy to see that, for any $k\geq 0$,
\[R=S_m\cup\underbrace{K_2\cup\dots\cup K_2}_{k}\]
and
\[R=\C(4,m-3)\cup\underbrace{K_2\cup\dots\cup K_2}_{k}\]
produce two non-isomorphic polyhedra, both of sequence \eqref{l1}.



\tikzset{every picture/.style={line width=0.75pt}} 

\begin{figure}[h!]
\centering

\begin{tikzpicture}[x=0.75pt,y=0.75pt,yscale=-0.8,xscale=0.8]

\draw   (15,181.5) .. controls (15,116.05) and (67.34,63) .. (131.9,63) .. controls (196.46,63) and (248.8,116.05) .. (248.8,181.5) .. controls (248.8,246.95) and (196.46,300) .. (131.9,300) .. controls (67.34,300) and (15,246.95) .. (15,181.5) -- cycle ;
\draw    (66.97,83.31) -- (133.1,140.19) ;
\draw [shift={(133.1,140.19)}, rotate = 40.7] [color={rgb, 255:red, 0; green, 0; blue, 0 }  ][fill={rgb, 255:red, 0; green, 0; blue, 0 }  ][line width=0.75]      (0, 0) circle [x radius= 3.35, y radius= 3.35]   ;
\draw [shift={(66.97,83.31)}, rotate = 40.7] [color={rgb, 255:red, 0; green, 0; blue, 0 }  ][fill={rgb, 255:red, 0; green, 0; blue, 0 }  ][line width=0.75]      (0, 0) circle [x radius= 3.35, y radius= 3.35]   ;
\draw    (133.1,140.19) -- (99.03,69.77) ;
\draw [shift={(99.03,69.77)}, rotate = 244.18] [color={rgb, 255:red, 0; green, 0; blue, 0 }  ][fill={rgb, 255:red, 0; green, 0; blue, 0 }  ][line width=0.75]      (0, 0) circle [x radius= 3.35, y radius= 3.35]   ;
\draw    (133.1,140.19) -- (131.9,63) ;
\draw [shift={(131.9,63)}, rotate = 269.11] [color={rgb, 255:red, 0; green, 0; blue, 0 }  ][fill={rgb, 255:red, 0; green, 0; blue, 0 }  ][line width=0.75]      (0, 0) circle [x radius= 3.35, y radius= 3.35]   ;
\draw    (133.1,140.19) -- (167.17,68.42) ;
\draw [shift={(167.17,68.42)}, rotate = 295.39] [color={rgb, 255:red, 0; green, 0; blue, 0 }  ][fill={rgb, 255:red, 0; green, 0; blue, 0 }  ][line width=0.75]      (0, 0) circle [x radius= 3.35, y radius= 3.35]   ;
\draw    (133.1,140.19) -- (198.57,83.99) ;
\draw [shift={(198.57,83.99)}, rotate = 319.35] [color={rgb, 255:red, 0; green, 0; blue, 0 }  ][fill={rgb, 255:red, 0; green, 0; blue, 0 }  ][line width=0.75]      (0, 0) circle [x radius= 3.35, y radius= 3.35]   ;
\draw    (77.66,285.1) .. controls (99.03,266.82) and (165.83,266.14) .. (188.55,285.1) ;
\draw [shift={(188.55,285.1)}, rotate = 39.86] [color={rgb, 255:red, 0; green, 0; blue, 0 }  ][fill={rgb, 255:red, 0; green, 0; blue, 0 }  ][line width=0.75]      (0, 0) circle [x radius= 3.35, y radius= 3.35]   ;
\draw [shift={(77.66,285.1)}, rotate = 319.46] [color={rgb, 255:red, 0; green, 0; blue, 0 }  ][fill={rgb, 255:red, 0; green, 0; blue, 0 }  ][line width=0.75]      (0, 0) circle [x radius= 3.35, y radius= 3.35]   ;
\draw    (56.28,271.56) .. controls (101.57,243.8) and (160.49,246.51) .. (207.25,271.56) ;
\draw [shift={(207.25,271.56)}, rotate = 28.18] [color={rgb, 255:red, 0; green, 0; blue, 0 }  ][fill={rgb, 255:red, 0; green, 0; blue, 0 }  ][line width=0.75]      (0, 0) circle [x radius= 3.35, y radius= 3.35]   ;
\draw [shift={(56.28,271.56)}, rotate = 328.49] [color={rgb, 255:red, 0; green, 0; blue, 0 }  ][fill={rgb, 255:red, 0; green, 0; blue, 0 }  ][line width=0.75]      (0, 0) circle [x radius= 3.35, y radius= 3.35]   ;
\draw    (131.9,300) ;
\draw [shift={(131.9,300)}, rotate = 0] [color={rgb, 255:red, 0; green, 0; blue, 0 }  ][fill={rgb, 255:red, 0; green, 0; blue, 0 }  ][line width=0.75]      (0, 0) circle [x radius= 3.35, y radius= 3.35]   ;
\draw   (383,182.5) .. controls (383,117.05) and (435.34,64) .. (499.9,64) .. controls (564.46,64) and (616.8,117.05) .. (616.8,182.5) .. controls (616.8,247.95) and (564.46,301) .. (499.9,301) .. controls (435.34,301) and (383,247.95) .. (383,182.5) -- cycle ;
\draw    (501.1,141.19) -- (386.8,154) ;
\draw [shift={(386.8,154)}, rotate = 173.61] [color={rgb, 255:red, 0; green, 0; blue, 0 }  ][fill={rgb, 255:red, 0; green, 0; blue, 0 }  ][line width=0.75]      (0, 0) circle [x radius= 3.35, y radius= 3.35]   ;
\draw [shift={(501.1,141.19)}, rotate = 173.61] [color={rgb, 255:red, 0; green, 0; blue, 0 }  ][fill={rgb, 255:red, 0; green, 0; blue, 0 }  ][line width=0.75]      (0, 0) circle [x radius= 3.35, y radius= 3.35]   ;
\draw    (501.1,141.19) -- (467.03,70.77) ;
\draw [shift={(467.03,70.77)}, rotate = 244.18] [color={rgb, 255:red, 0; green, 0; blue, 0 }  ][fill={rgb, 255:red, 0; green, 0; blue, 0 }  ][line width=0.75]      (0, 0) circle [x radius= 3.35, y radius= 3.35]   ;
\draw    (501.1,141.19) -- (499.9,64) ;
\draw [shift={(499.9,64)}, rotate = 269.11] [color={rgb, 255:red, 0; green, 0; blue, 0 }  ][fill={rgb, 255:red, 0; green, 0; blue, 0 }  ][line width=0.75]      (0, 0) circle [x radius= 3.35, y radius= 3.35]   ;
\draw    (501.1,141.19) -- (535.17,69.42) ;
\draw [shift={(535.17,69.42)}, rotate = 295.39] [color={rgb, 255:red, 0; green, 0; blue, 0 }  ][fill={rgb, 255:red, 0; green, 0; blue, 0 }  ][line width=0.75]      (0, 0) circle [x radius= 3.35, y radius= 3.35]   ;
\draw    (386.8,154) -- (616.8,193) ;
\draw [shift={(616.8,193)}, rotate = 9.62] [color={rgb, 255:red, 0; green, 0; blue, 0 }  ][fill={rgb, 255:red, 0; green, 0; blue, 0 }  ][line width=0.75]      (0, 0) circle [x radius= 3.35, y radius= 3.35]   ;
\draw    (445.66,286.1) .. controls (467.03,267.82) and (533.83,267.14) .. (556.55,286.1) ;
\draw [shift={(556.55,286.1)}, rotate = 39.86] [color={rgb, 255:red, 0; green, 0; blue, 0 }  ][fill={rgb, 255:red, 0; green, 0; blue, 0 }  ][line width=0.75]      (0, 0) circle [x radius= 3.35, y radius= 3.35]   ;
\draw [shift={(445.66,286.1)}, rotate = 319.46] [color={rgb, 255:red, 0; green, 0; blue, 0 }  ][fill={rgb, 255:red, 0; green, 0; blue, 0 }  ][line width=0.75]      (0, 0) circle [x radius= 3.35, y radius= 3.35]   ;
\draw    (424.28,272.56) .. controls (469.57,244.8) and (528.49,247.51) .. (575.25,272.56) ;
\draw [shift={(575.25,272.56)}, rotate = 28.18] [color={rgb, 255:red, 0; green, 0; blue, 0 }  ][fill={rgb, 255:red, 0; green, 0; blue, 0 }  ][line width=0.75]      (0, 0) circle [x radius= 3.35, y radius= 3.35]   ;
\draw [shift={(424.28,272.56)}, rotate = 328.49] [color={rgb, 255:red, 0; green, 0; blue, 0 }  ][fill={rgb, 255:red, 0; green, 0; blue, 0 }  ][line width=0.75]      (0, 0) circle [x radius= 3.35, y radius= 3.35]   ;
\draw    (499.9,301) ;
\draw [shift={(499.9,301)}, rotate = 0] [color={rgb, 255:red, 0; green, 0; blue, 0 }  ][fill={rgb, 255:red, 0; green, 0; blue, 0 }  ][line width=0.75]      (0, 0) circle [x radius= 3.35, y radius= 3.35]   ;

\draw (57,24) node [anchor=north west][inner sep=0.75pt]   [align=left] {$\displaystyle k=2,\ A=S_{5}$};
\draw (433,27) node [anchor=north west][inner sep=0.75pt]   [align=left] {$\displaystyle k=2,\ A=\mathcal{C}( 2,4)$};
\draw (230,318) node [anchor=north west][inner sep=0.75pt]  [font=\large] [align=left] {$\displaystyle \lambda_1^{12}=p-2,5,4^{9} ,3$};
\draw (290,152) node [anchor=north west][inner sep=0.75pt]  [font=\huge] [align=left] {$\displaystyle \ncong $};

\end{tikzpicture}
\caption{Two realisations of the sequence \eqref{l1} when $m=5$ and $k=2$ (sketches of $F$).}
\label{fig:forest}
\end{figure}
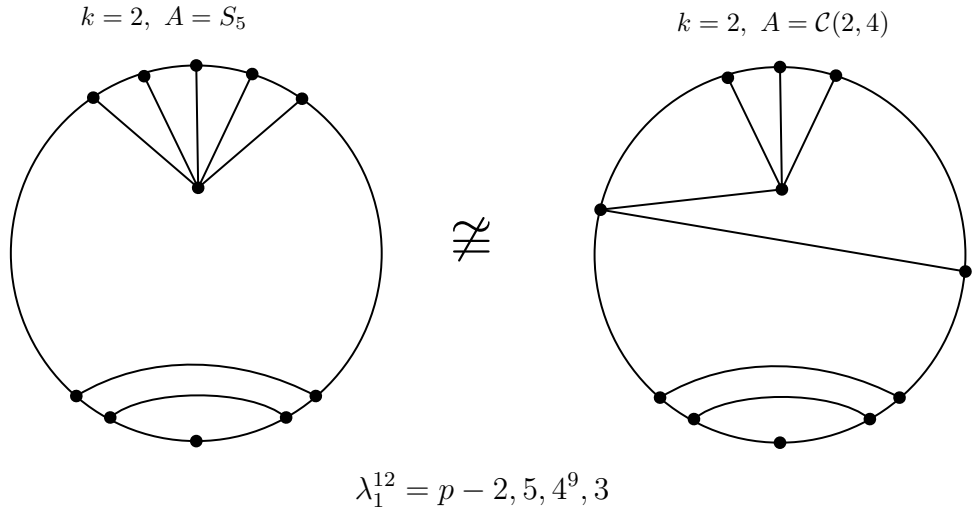

It follows that the only candidate unigraphic sequence for $G$ when $R$ is a forest is \eqref{l1} in the case $m=4$, i.e.
\[p-2,4^{(2k+5)},3.\] Since $2k+5+2=p$, we have $k=(p-7)/2$ and $p$ is necessarily odd. On the other hand, we have the following.

\begin{lemma}
    The sequence $\gamma(p)=p-2,4^{p-2},3$ with odd $p\geq 7$ is unigraphic.
\end{lemma}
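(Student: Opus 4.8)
The plan is to prove existence and uniqueness separately, working throughout inside the setup of this section: the vertex $y$ of degree $p-2$, the exceptional vertices $a$ (not adjacent to $y$) and $c$ (the unique degree-$3$ vertex), the graph $F=G-y$, the cycle $C$ through $W$ supplied by Lemma \ref{bigcycle}, and $R=F-E(C)-c$. Writing $k=(p-7)/2$, I would first exhibit an explicit realisation and check it is polyhedral. Take $C$ to be the cycle $u_1,u_2,u_3,u_4,w_1,\dots,w_{2k+1}$, join $y$ to every vertex of $C$, join $a$ to the four consecutive vertices $u_1,u_2,u_3,u_4$, set $c=w_{k+1}$, and add the nested chords $w_{k+1-i}w_{k+1+i}$ for $1\le i\le k$. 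This graph is planar (the fan at $a$ and the nested chords can be drawn in disjoint angular sectors of the outer region), has degree sequence $\gamma(p)$, and is $3$-connected: removing $y$ leaves the $2$-connected graph $F$, while removing any two vertices other than $y$ leaves $y$ adjacent to all surviving cycle vertices and hence leaves $G$ connected.

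For uniqueness, let $G$ be an arbitrary polyhedral realisation of $\gamma(p)$. A direct degree count in $R$ gives $\deg_R(a)=4$ and $\deg_R(v)=1$ for every other vertex, so $R$ contains no cycle at all; in particular the triangle alternative of Lemma \ref{max 1 triangle} never arises for $\gamma$, and, as already recorded, $R=S_4\cup K_2\cup\dots\cup K_2$ ($k$ copies of $K_2$) with $a$ the centre of the star. It then remains to show that there is, up to isomorphism, only one way to place this $R$ back onto $C$ together with $c$. The four neighbours of $a$ split $C$ into four arcs, and by planarity each chord (arising from a $K_2$) joins two vertices of a common arc at cyclic distance $\ge 2$, a distance-$1$ chord being forbidden as it would duplicate an edge of $C$.

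The crux is a matching observation on the internal vertices of a single arc, read in cyclic order. Every such vertex is an endpoint of exactly one chord, except possibly $c$, which is an endpoint of none. Since a non-crossing perfect matching of points on a segment always contains an edge joining two consecutive points (take an edge of minimal span), and such an edge is forbidden here, any arc not containing $c$ can have no internal vertices whatsoever. Hence all $2k$ chord endpoints together with $c$ lie in one arc, which forces the four spokes to be consecutive along $C$. Within that distinguished arc one refines the same minimal-span argument: the shortest chord must straddle exactly the unmatched vertex $c$ (it cannot be a consecutive pair and cannot enclose matched vertices, which would give a shorter chord), so $c$ acquires chord neighbours $c^-,c^+$; the internal vertices strictly left and strictly right of the block $\{c^-,c,c^+\}$ then carry no unmatched vertex and can only be paired across this block, which is possible exactly when the two sides have equal size, i.e. when $c$ is central, and yields precisely the symmetric nesting $w_{k+1-i}w_{k+1+i}$. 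This determines $F$, and hence $G$, up to isomorphism, completing the proof.

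The step I expect to be the main obstacle is this last uniqueness of the near-perfect matching in the arc containing $c$, namely both that $c$ is forced to the centre and that the nesting is the only admissible configuration. I would formalise it by induction on $k$, peeling off the innermost chord straddling $c$ and observing that the residual vertices on either side must match across the resulting block; the balancing of the two sides propagates the symmetry and simultaneously pins $c$ to the central position.
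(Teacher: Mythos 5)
Your proof is correct, and it does more than the paper's. The paper's own argument is two lines: since $\deg(a)\geq 4$ forces $\deg(a)=4$, the graph $R=G-y-E(C)-c$ has degree sequence $4,1^{p-3}$, whose only realisation is $S_4\cup K_2\cup\dots\cup K_2$; it then stops, leaning on the section's standing assertion that ``$R$ uniquely determines $F$ and hence the polyhedron $G$.'' That assertion is exactly the point your proof does not take for granted: a priori the abstract graph $S_4\cup kK_2$ could be reattached to the cycle $C$ in several non-isomorphic ways, and your non-crossing-matching analysis (empty arcs force the four spokes of $a$ to be consecutive; the minimal-span chord must straddle $c$; no chord can live entirely on one side of the block $\{c^-,c,c^+\}$, so the remaining chords nest across it and pin $c$ to the centre) is precisely the missing verification, together with an explicit existence check the paper also leaves implicit. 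So the two arguments agree on the key reduction to $R$, but yours is self-contained where the paper's defers to an unelaborated claim. The only soft spot is that the final uniqueness of the near-perfect matching is sketched rather than written out; the outline is sound, and the cleanest formalisation is the one you hint at: show by the same minimal-span argument that no chord joins two vertices on the same side of the block, so all remaining chords go across it, which forces equal side lengths and the nested configuration without needing a separate induction.
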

\begin{proof}
We already know that $\deg(a)\geq 4$, thus here $\deg(a)=4$. Therefore, the sequence of $R=G-y-E(C)-c$ is
\[4,1^{p-3}.\]
The only graph of such sequence is indeed the disjoint union of $S_4$ with $a$ at the centre, and $(p-7)/2$ copies of $K_2$.
\end{proof}
\end{enumerate}

\subsection{Second case: $R$ contains a triangle}
Let $v,w$ be the other two vertices of this triangle apart than $a$. By prior considerations (cf. Lemma \ref{max 1 triangle}) if we consider the cycle $C$, the vertices $a,c$ appear on opposite sides of the edge $vw$. Moreover, each vertex of $R=F-E(C)-c$ adjacent to $a$ other than $v,w$ is of degree exactly $4$ in $G$, otherwise there would be other degree $3$ vertices in $G$ apart from $c$. For the same reason, at least one of $v,w$ has degree $5$, say
\[\deg_G(w)=5.\]
Again by the same argument, all vertices adjacent to $v$ in $R$ other than $a$ and $w$ (if any others exist) lie on $C$ on the $cw$-path not containing $v$, and at most one of these vertices (the one closest to $c$ along this $cw$-path) can be of degree $5$ or higher. We summarise these consideration in the following result.

\begin{lemma}
	\label{lem:R}
	Let $S$ be the graph $R-w-av$. Then $S$ is the disjoint union
\[S=S_{m-2}\cup T\cup P_1\cup\dots\cup P_k,\]	
	where $S_{m-2}$ is a star centred at $a$, $T$ is a caterpillar containing $v$, and $P_1,\dots,P_k$ are $k\geq 0$ copies of $K_2$.
\end{lemma}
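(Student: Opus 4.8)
The plan is to split $S=R-w-av$ into the piece around $a$, the piece around $v$, and everything else, and to name each piece using the structural facts recorded just before the statement (writing $m=\deg_G(a)$). I would open with two bookkeeping remarks. First, by Lemma \ref{max 1 triangle} the triangle $avw$ is the only cycle of $R$, so deleting $av$ already destroys it and $S$ is a forest. Second, since $y$ is joined to every vertex of $W$ and $c$ carries no chord, each $u\in W\setminus\{c\}$ loses exactly one $y$-edge and its two $C$-edges when passing to $R$, so $\deg_R(u)=\deg_G(u)-3$; hence $\deg_R(u)\ge 1$, with equality precisely when $\deg_G(u)=4$. Similarly $\deg_R(a)=m$ since $a\notin C$ and $a\not\sim y,c$, while $\deg_G(w)=5$ forces the only $R$-neighbours of $w$ to be $a$ and $v$.

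I would then extract the star. Deleting $w$ removes exactly the edges $wa,wv$, and deleting $av$ removes the last edge from $a$ into the triangle, so the $S$-neighbours of $a$ are precisely its $R$-neighbours other than $v,w$. By the remark preceding the statement each of these has degree $4$ in $G$, hence degree $1$ in $R$, so each is a leaf attached only to $a$; the component of $a$ in $S$ is therefore the star $S_{m-2}$. For the component of $v$, I would use that the $R$-neighbours of $v$ other than $a,w$ lie on the $cw$-arc avoiding $v$, at most one of them (the one nearest $c$) having degree $\ge 5$. Iterating the same degree-$3$-avoidance argument on that single exceptional neighbour yields one spine path running from $v$ towards $c$ with only leaves hanging off it, which is by definition a caterpillar $T$ containing $v$.

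It remains to show that every component of $S$ meeting neither $a$ nor $v$ is a copy of $K_2$. Such a component lies entirely on $C$, none of its vertices being adjacent to $a$ or $v$, and none being $c$, so all of them have $\deg_R\ge 1$; by planarity all its chords are drawn inside $C$ (the outer face of $F$ being the one vacated by $y$), so they are non-crossing and join pairs of $C$-vertices that are not consecutive along $C$. The crux is to rule out a vertex carrying two chords: two such nested non-crossing chords at one vertex would trap at least one $C$-vertex on each side, each of which would then be forced to have degree $3$ in $G$, giving two degree-$3$ vertices and contradicting the uniqueness of $c$. Hence every vertex of such a component has exactly one chord, so each component is a single edge, and we obtain $S=S_{m-2}\cup T\cup P_1\cup\dots\cup P_k$ with $k\ge 0$.

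I expect this last step to be the genuine obstacle: it is the one place that needs the planar ``non-crossing chord traps an uncoverable vertex'' analysis rather than a direct citation of the facts gathered before the statement. The caterpillar claim for $T$ is of the same flavour, but it is already essentially delivered by the recorded description of the neighbours of $v$, so the only real work is the $K_2$ dichotomy for the leftover components.
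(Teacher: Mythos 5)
Your degree bookkeeping, the extraction of the star $S_{m-2}$ at $a$, and the caterpillar component of $v$ all check out and coincide with what the paper actually records in the paragraph preceding the statement (the paper offers no further proof: the lemma is explicitly a ``summary'' of those considerations). The genuine issue is exactly where you predicted it: the claim that every component of $S$ avoiding $a$ and $v$ is a $K_2$. Your argument for it --- that two nested non-crossing chords at a vertex $u$ would trap $C$-vertices ``forced to have degree $3$'' --- does not work. The outerplanarity argument only guarantees a vertex of degree $2$ \emph{in the trapped region's induced graph}, and the vertices trapped between the two nested chords at $u$ can all be joined to $u$ itself, forming a fan of triangles; in that fan the only degree-$2$ vertices of the region are the two non-shared chord endpoints, which have further edges outside the region, so no second degree-$3$ vertex of $G$ is produced. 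A concrete instance: take $p=11$, $C=(v,t_1,t_2,t_3,c,t_5,w,n_1,n_2)$, put $a$ inside adjacent to $v,w,n_1,n_2$, add the chord $vw$ and the fan $t_5t_1,t_5t_2,t_5t_3$. This is a $3$-connected planar graph with exactly one vertex of degree $p-2$ and one of degree $3$, $R$ contains the triangle $avw$, yet the component of $t_5$ in $S$ is the star $S_3$, not a union of $K_2$'s. A useful sanity check that your mechanism proves too much: the fan-at-a-shared-endpoint configuration is precisely how the non-trivial caterpillar $T$ centred at $v$ arises, so an argument that forbids it at every vertex would also forbid the non-star caterpillars that Lemma \ref{lem:star} is later needed to exclude (and can only exclude under the unigraphic hypothesis).

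So the $K_2$ dichotomy cannot be obtained by the purely local planarity argument you give; it needs either an extra structural input or an appeal to unigraphicity (a degree-swapping modification in the style of Lemmas \ref{lem:4}--\ref{lem:k}, replacing a fan such as the one at $t_5$ above by a different subgraph with the same degree sequence). Be aware that the paper itself supplies no explicit justification for this part of the lemma either, so you have not missed a clean argument that is present in the source; but as written your proof of the last step is incorrect, and the example above shows the step is where all the remaining content of the lemma lives.
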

If we understand $m=\deg(a)$, $T$, and $k$, we will be able to recover $R$ and thus $G$. For instance, if we know that $m=7$, $T=\C(2,3)$ with $v$ the non-leaf vertex of $T$ of degree $j_1=2$ in $T$, and $k=1$, we construct a graph as illustrated in Figure \ref{fig:S1}.

\begin{figure}[h!]
	\centering
	\includegraphics[width=6cm]{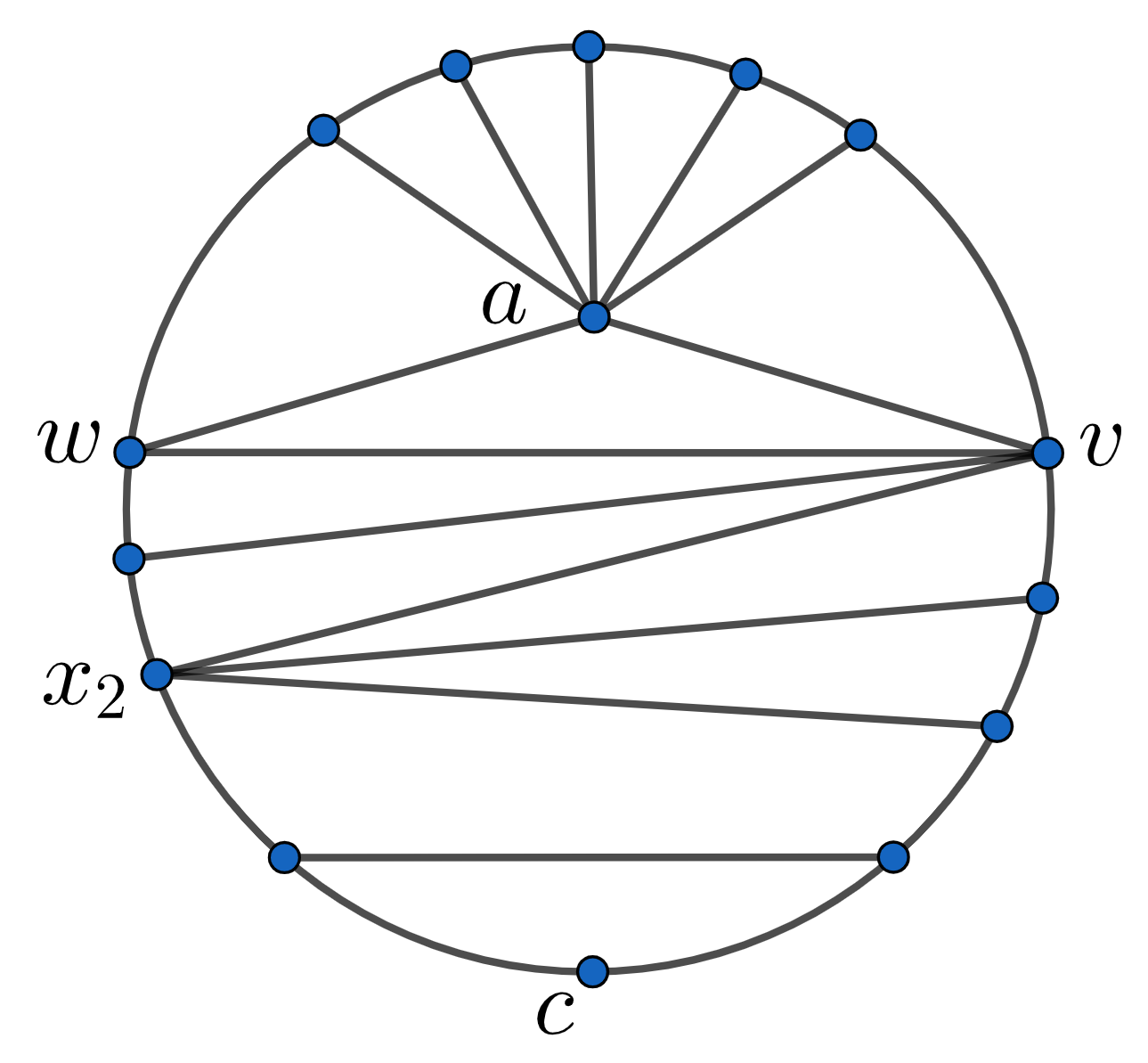}
	\caption{In this example, given $m=7$, $T=\C(2,3)$, with central path $v,x_2$, and $k=1$, we sketch $F=G-y$ as shown, with $p=1+m+j_1+(j_2-1)+2k+1+1=16$. Recall that $y$ is adjacent to all vertices except $a$.}
	\label{fig:S1}
\end{figure}

We begin our analysis of $m,T,k$ with the following auxiliary result.

\begin{lemma}
	\label{lem:4}
	Let $G$ be unigraphic and $R$ contain a triangle. Suppose either that $T$ is non-trivial, or that $k\geq 1$ (or both). Then $m=4$.
\end{lemma}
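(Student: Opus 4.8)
The plan is to argue by contraposition. Since we already know $m=\deg(a)\geq 4$ (the remark after Lemma \ref{bigcycle}), it suffices to assume $m\geq 5$ together with the hypothesis, and to exhibit two non-isomorphic polyhedra sharing the degree sequence of $G$; this shows $G$ is not unigraphic and forces the conclusion $m=4$. The mechanism I would use is the star-versus-caterpillar reshaping already exploited in the forest case of Section \ref{sec:for} (following \cite{Maffucci 1}). The guiding observation is that in any realization built from the cycle $C$ plus chords, every vertex other than $a$ lies on $C$ and is adjacent to $y$, so passing a vertex across $C$ shifts its degree by exactly $3$: the off-cycle vertex has $G$-degree equal to its $R$-degree, while an on-cycle vertex has $G$-degree equal to its $R$-degree plus $3$. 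Hence one can move the distinguished high-degree vertex from off $C$ (not adjacent to $y$) to on $C$ (adjacent to $y$) without changing the degree sequence. Because $m\geq 5$, the degree-$m$ vertex $a$ is genuinely distinguished — it sits in a triangle with the two degree-$5$ vertices $v,w$ and is not adjacent to $y$ — so the adjacency of the degree-$m$ vertex to $y$ (and to $v,w$) is an isomorphism invariant that will separate the two realizations; for $m=4$ it is merely one more degree-$4$ vertex and no such separation exists, which is exactly why $m=4$ survives.

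Concretely, in the realization $G$ at hand the degree-$m$ vertex $a$ is off $C$, inside the triangle $a,v,w$ and carrying the star $S_{m-2}$ (which has at least three leaves since $m\geq 5$). I would build a second realization $G'$ by re-realizing the same $R$-degree multiset with the degree-$m$ vertex placed \emph{on} $C'$ instead: converting $G$-degrees to $R$-degrees (subtract $3$ on the cycle, leave the off-cycle vertex fixed) shows that the role previously played by $a$ can instead be taken by a degree-$4$ vertex occupying the off-cycle slot, while an on-cycle vertex carries degree $m$. The two cases of the hypothesis are precisely what supply the slack needed for this alternative realization to exist — most cleanly, as a polyhedron whose graph $R'$ is a forest. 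When $k\geq 1$ a spare $K_2$ furnishes the degree-$4$ vertex for the off-cycle position (and gives the forest the component it needs), while when $T$ is non-trivial the caterpillar $T$ already provides extra on-cycle structure to absorb the reshaping; in both situations $m\geq 5$ is used to guarantee that $S_{m-2}$ can be reshaped into a caterpillar $\C(4,\dots)$, exactly as in Section \ref{sec:for}. In the degenerate case $T$ trivial and $k=0$ no such alternative exists, consistent with that case being excluded from the statement.

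The hard part will not be the degree bookkeeping but verifying that the alternative graph $G'$ is genuinely a polyhedron: one must check that $F'=G'-y$ is $2$-connected and planar and that $G'$ is $3$-connected, which is where the two hypothesis sub-cases require care and where the embedding of the chords around $C$, rather than the abstract graph $R'$ alone, matters. Once $G'$ is confirmed to be a polyhedron, non-isomorphism with $G$ is immediate from the invariant recorded above: in $G$ the degree-$m$ vertex lies in a triangle with two degree-$5$ vertices and is not adjacent to $y$, whereas in $G'$ it is adjacent to $y$. Thus $G$ admits at least two distinct realizations, contradicting unigraphicity, and therefore $m=4$.
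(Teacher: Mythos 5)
Your overall strategy --- assume $m\geq 5$ for contradiction and exhibit a second polyhedron with the same degree sequence, exploiting the fact that an on-cycle vertex contributes $\deg_R+3$ to the sequence while the off-cycle vertex contributes $\deg_R$ --- matches the paper's, and your non-isomorphism invariant (the degree of the unique non-neighbour of $y$ drops from $m\geq 5$ to $4$) is the right one. But there is a genuine gap: the alternative realisation $G'$ is never constructed. You describe it only as ``a polyhedron whose graph $R'$ is a forest'' with a degree-$4$ vertex in the off-cycle slot and a degree-$m$ vertex on the cycle, and you explicitly defer ``the hard part'' of checking that such a $G'$ exists, is planar and $3$-connected, and reproduces the whole sequence (including $\deg(v)$, $\deg(w)=5$, and the uniqueness of the degree-$3$ vertex). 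That verification is the substance of the lemma; without it there is no proof. You also never pin down where the hypothesis ($T$ non-trivial or $k\geq 1$) actually enters your argument, and the one concrete mechanism you offer --- ``a spare $K_2$ furnishes the degree-$4$ vertex for the off-cycle position'' --- does not work as stated, since a $K_2$-endpoint has $R$-degree $1$ whereas the off-cycle slot requires $R$-degree $4$; it would have to be rebuilt, not relocated.

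The paper avoids the global re-realisation entirely by a local surgery: the hypothesis guarantees a pendant vertex $x\neq v$ of $T$ or of some $P_i$ lying on $C$ adjacent to the degree-$3$ vertex $c$; one deletes $m-4$ leaves of the star $S_{m-2}$ centred at $a$ and inserts $m-4$ new vertices into $C$ next to $c$, each joined to $x$ by a chord, so that the degrees of $a$ and $x$ swap while every other degree (including $c$'s) is unchanged. Planarity and $3$-connectivity of the modified graph are immediate from the picture (cf.\ Figure \ref{fig:for1}), which is exactly the check your version leaves open. To repair your proposal, either carry out the forest re-realisation explicitly (component by component, with an embedding and a connectivity check) or switch to a local modification of this kind.
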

\begin{proof}
	By the hypotheses, there is in $T$ or in one of the $P_i$ a pendant vertex $x\neq v$, that is adjacent to the vertex $c$ in the cycle $C$ of $F$. By contradiction, let $m\geq 5$. We delete $m-4$ vertices
	\[u_1,\dots,u_{m-4}\]
	of degree $4$ in $G$ that are adjacent to $a$, and then insert vertices
	\[z_1,\dots,z_{m-4}\]
	and edges
	\[xz_1,\dots,xz_{m-4}\]
	in $C$, such that
	\[x,c,z_1,\dots,z_{m-4}\]
	is a path in $C$. The degrees of $a$ and $x$ have swapped, hence the polyhedron $G$ is not unigraphic. This construction is very similar to the one in Figure \ref{fig:for1}.
\end{proof}

We now inspect in turn the possibilities of $T$ being the trivial graph or not.

\subsubsection{$T$ is trivial}
\label{sec:triv}
Here the sequence of $G$ is
\begin{equation}
\label{l2}
p-2,m,5,5,4^{m-2+2k},3.
\end{equation}
We analyse two subcases.
\begin{enumerate}
	\item
	If $k\geq 1$, then $m=4$ by Lemma \ref{lem:4}. 
	To see that this sequence is not unigraphic, we replace $R=G-E(C)-c$ with the union of $\C(5,2)$ and $k-1$ copies of $K_2$ (in particular, in the new graph, $R$ is a forest and $a$ is the non-leaf vertex of $\C(5,2)$ with degree $5$).
	\item
	If $k=0$, then \eqref{l2} reads $p-2,m,5,5,4^{m-2},3$. We have $m-2=p-5$, hence this sequence is simply
	\begin{equation}
	\label{eqn:nu}
	p-2,p-3,5,5,4^{p-5},3
	\end{equation}
	with $p\geq 7$. For the values $p=7,8$ we find $\alpha(7)=5^3,4^3,3$ and the exceptional $6,5^3,4^3,3$. Now let $p\geq 9$, i.e. $m\geq 6$. In the graph $S$, we are left with a star $S_{m-2}$ centred at $a$, and a copy of $K_1$ given by $v$. Similarly to before, replacing the copy of $S_{m-2}$ with $S_{3}$ and $K_1$ with $S_{m-5}$ alters $G$ but not \eqref{eqn:nu}, so that \eqref{eqn:nu} is not unigraphic for any $p\geq 9$.
\end{enumerate}

\subsubsection{$T$ is non-trivial}
By Lemma \ref{lem:4}, we already know that $m=4$. We need to find the caterpillar $T$ and the number of copies $k$ of $K_2$. In the following two lemmas, assuming that $G$ is unigraphic, we will show that $T$ is a star centred at $v$, and that $k=0$. We will use similar ideas to the case of $R$ being a forest.

\begin{lemma}
	\label{lem:star}
If $G$ is unigraphic, $R$ contains a triangle, and the caterpillar $T$ is non-trivial, then $T$ is a star centred at $v$.
\end{lemma}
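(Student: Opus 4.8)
The plan is to establish the contrapositive by a surgery argument: assuming $R$ contains a triangle and the caterpillar $T$ is non-trivial but \emph{not} a star centred at $v$, I will build a second polyhedron with the same degree sequence as $G$ that is not isomorphic to it, contradicting unigraphicity. By Lemma~\ref{lem:4} we already know $m=\deg(a)=4$. Moreover, by the remarks preceding Lemma~\ref{lem:R}, the vertex $v$ has at most one neighbour of degree $\geq 5$ in $G$; hence $v$ cannot be an interior vertex of the central path of $T$ (that would give it two high-degree path-neighbours), so either $v$ is an endpoint of that path or $v$ is a leaf of $T$. Treating first the endpoint case, I write $T=\C(j_1,\dots,j_l)$ with $\deg_T(v)=j_1$; the failure of $T$ to be a star centred at $v$ then means precisely $l\geq 2$.

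The surgery rests on the distinct degree shifts attached to the roles a vertex can play in the decomposition of Lemma~\ref{lem:R}. A leaf, or a vertex of one of the $K_2$'s, has degree $4$ in $G$; an ordinary central vertex of $T$ of $S$-degree $d$ has degree $d+3$ (two cycle edges and the edge to $y$); the triangle vertex $v$ of $S$-degree $d$ has degree $d+5$, carrying the extra edges $av,vw$; and $a$, lying off $C$ and not adjacent to $y$, of star-degree $s$ has degree $s+2$. I exploit the gap between the shift $+5$ at $v$ and the shift $+2$ at $a$ by \emph{relocating the far endpoint of $T$ onto $a$}: let $u_l$ be the last central vertex of $T$, of $S$-degree $j_l$ and hence of degree $j_l+3$ in $G$. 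In the new graph $G'$ I replace the star $S_{m-2}=S_2$ at $a$ by $S_{j_l+1}$ (so that $\deg_{G'}(a)=j_l+3$) and replace $T$ by $T'=\C(j_1,\dots,j_{l-1})$, keeping $v$ at its end and keeping the same number $k$ of copies of $K_2$.

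A direct count shows that preserving $k$ preserves the vertex total, $|V(G')|=|V(G)|$. The only change to the multiset of degrees $\geq 5$ is that the value $j_l+3$ migrates from $u_l$ to $a$, while $\deg(v)=j_1+5$ and $\deg(w)=5$ stay put; since the vertex total is unchanged, the number of degree-$4$ vertices is also unchanged, and $G$ and $G'$ have identical degree sequences. Non-isomorphism is then immediate and clean: a short check (using $l\geq 2$ and $j_i\geq 2$) shows that in both graphs $y$ is the \emph{unique} vertex of degree $p-2$, so its unique non-neighbour $a$ is an isomorphism invariant; but $\deg(a)=4$ in $G$ whereas $\deg(a)=j_l+3\geq 5$ in $G'$. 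This forces $l=1$, i.e.\ $T$ is a star centred at $v$.

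The main point requiring care is verifying that $G'$ is again a polyhedron, i.e.\ that $F'=G'-y$ is planar and $3$-connected; this is most transparently done by redrawing $F'$ as in the figures of this section, with the enlarged $a$ a hub joined to a consecutive arc of $C$ and $T'$ a shorter chord-caterpillar. The remaining obstacle is the degenerate configuration in which $v$ is a leaf of $T$ rather than a central endpoint, so that $T$ is a star centred at some $z\neq v$; here the same idea applies, relocating the degree of $z$ onto $a$ (equivalently, recentring the star at $v$), which again alters $\deg(a)$ and breaks uniqueness. Once both cases are settled, the conclusion that $T$ is a star centred at $v$ follows.
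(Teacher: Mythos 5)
Your proposal follows the same overall strategy as the paper's proof: argue by contradiction, perform a degree-sequence-preserving surgery that transfers degree between $a$ and the far end of the caterpillar $T$, and then separate the two realisations by $\deg(a)$, which is an isomorphism invariant as the unique non-neighbour of the unique vertex of degree $p-2$. Within that common strategy your surgery differs in detail, and is in fact the cleaner one: you delete the far spine vertex $u_l$ outright and enlarge the star at $a$ from $S_2$ to $S_{j_l+1}$, and one checks that this preserves both the order and the degree multiset for every $j_l\geq 2$ (the value $j_l+3$ simply migrates from $u_l$ to $a$, while the count of degree-$4$ vertices is unchanged). The paper instead moves one unit at a time, replacing $S_2$ by $S_3$ and $\C(j_1,\dots,j_l)$ by $\C(j_1,\dots,j_l-1)$ when $j_l\geq 3$; as literally written that exchange trades the degrees $\{j_l+3,\,4\}$ for $\{j_l+2,\,5\}$ and hence preserves the sequence only when $j_l=2$. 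Your all-at-once relocation is the move that actually works uniformly, and it is the exact analogue (in reverse) of the paper's own $m\geq 5$ surgery in the forest case of section \ref{sec:for}.

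The one genuine gap is the sub-case in which $v$ is a leaf of $T$, i.e.\ $\deg_S(v)=1$, equivalently $\deg_G(v)=6$. First, your inference that $T$ must then be a star centred at some $z\neq v$ does not follow: $T$ could be a longer caterpillar with $v$ hanging off the spine endpoint nearest $w$ (harmless, since your main surgery applied at the other end of the spine still goes through). The real problem is the residual configuration $T=S_{j_1}$ centred at $z$ with $v$ among its leaves and $j_1\geq 2$: there, neither of your two proposed fixes preserves the degree sequence. Relocating $z$'s degree onto $a$ leaves $v$ isolated in $S$ and drops $\deg(v)$ from $6$ to $5$, while recentring the star at $v$ replaces the degree pair $(6,\,j_1+3)$ at $(v,z)$ by $(j_1+5,\,4)$, a different multiset for every $j_1\geq 2$. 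A correct repair is to compare instead with the realisation in which the star at $a$ is $S_4$ (so $\deg(a)=6$) and $T=S_{j_1-2}$ is centred at $v$ (so $\deg(v)=j_1+3$); this has the same order and the same degree sequence, and again differs from $G$ in the value of $\deg(a)$. To be fair, the paper's own proof also passes over this configuration in silence, so on balance your write-up is no less rigorous than the original and is more careful in the main case.
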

\begin{proof}
Arguing by contradiction, we write
\[T=\C(j_1,\dots,j_l),\]
where $l\geq 1$ and $v$ is not the vertex represented by the degree $j_l$ in $T$. By Lemma \ref{lem:4}, we know that $m=4$. As in section \ref{sec:for}, case 1., 
we replace $S_{2}$ with $S_{3}$ in the graph $S$, and $T$ with
\[T=\C(j_1,\dots,j_l-1)\]
or
\[\C(j_1,\dots,j_{l-1})\]
depending on whether $j_l\geq 3$ or $j_l=2$ ($a$ has degree $5$ in this new polyhedron). Therefore, $G$ is not unigraphic.
\end{proof}

\begin{lemma}
	\label{lem:k}
If $G$ is unigraphic, $R$ contains a triangle, and the caterpillar $T$ is non-trivial, then $k=0$.
\end{lemma}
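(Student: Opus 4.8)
The plan is to argue by contradiction: assuming $k\geq 1$, I would produce a second polyhedron with the same degree sequence as $G$, contradicting unigraphicity. By Lemma \ref{lem:4} we already know $m=4$, and by Lemma \ref{lem:star} that $T=S_n$ is a star centred at $v$ with $n\geq 1$. The first step is to record the degree sequence of $G$. Since every vertex of $W$ lies on $C$ and is adjacent to $y$, passing from $R$ to $G$ adds $3$ to the degree of each cycle vertex, while $\deg_G(a)=\deg_R(a)=m=4$. As $\deg_R(v)=n+2$ (the $n$ leaves of $T$ together with the triangle edges $av,vw$), this gives $\deg_G(v)=n+5$, and we are given $\deg_G(w)=5$; all remaining vertices, namely $a$, the $n$ leaves of $T$, the two leaves of $S_{m-2}=S_2$, and the $2k$ endpoints of the $K_2$'s, have degree $4$, with $c$ of degree $3$. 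Using $p=n+2k+7$, the sequence of $G$ is
\[
p-2,\ n+5,\ 5,\ 4^{p-4},\ 3.
\]

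The key step is to realise this same sequence by a forest, in direct analogy with Section \ref{sec:triv}, case 1. I would replace $R$ with
\[
R'=\C(n+5,2)\ \cup\ \underbrace{K_2\cup\dots\cup K_2}_{k-1},
\]
taking $a$ to be the degree-$(n+5)$ spine endpoint of the caterpillar (which is legitimate since $k\geq 1$ leaves $k-1\geq 0$ copies of $K_2$). A direct count in the resulting polyhedron $G'$ shows that the off-cycle vertex $a$ has degree $n+5$, the second spine vertex has degree $2+3=5$, and the $n+4$ leaves of $a$, the single leaf of the second spine vertex, and the $2(k-1)$ endpoints of the remaining $K_2$'s are all of degree $4$ — exactly $(n+4)+1+2(k-1)=p-4$ of them. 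Hence $G'$ has the same degree sequence as $G$. For $n=0$ this replacement is precisely the $\C(5,2)\cup(k-1)K_2$ of Section \ref{sec:triv}, so the present argument is the natural extension of that case to non-trivial $T$.

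It remains to see that $G\not\cong G'$. Since $k\geq 1$ forces $n+5=p-2k-2<p-2$, the sequence has a unique vertex of degree $p-2$ (namely $y$) and a unique vertex of degree $3$ (namely $c$); by Steinitz's theorem the embedding of a polyhedron is essentially unique, so the graph $R=G-y-E(C)-c$ is an isomorphism invariant. But $R$ contains a triangle whereas $R'$ is a forest, so $G$ and $G'$ are not isomorphic. This contradicts the unigraphicity of $G$, and therefore $k=0$.

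The main obstacle I expect is not the degree bookkeeping but confirming that the forest realisation $G'$ is a genuine polyhedron — that the chords $\C(n+5,2)\cup(k-1)K_2$ can be inserted into a cycle so that, after adding $c$ and $y$, the result is $3$-connected and planar. This is exactly the construction carried out in Section \ref{sec:for}, so I would appeal to it rather than repeat it; the only points requiring care are that $a$ occupies an end of the caterpillar spine (so that $c$ stays the unique degree-$3$ vertex) and that $n\geq 1$, $k\geq 1$ keep the values $n+5$, $5$, $4$ distinct, ensuring the two degree multisets coincide exactly.
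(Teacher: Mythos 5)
Your proof is correct and follows essentially the same route as the paper: assuming $k\geq 1$, you trade the triangle-containing $R$ for a forest realisation consisting of a two-spine caterpillar together with $k-1$ copies of $K_2$, which preserves the degree sequence $p-2,\,n+5,\,5,\,4^{p-4},\,3$ while producing a non-isomorphic polyhedron. The only (immaterial) difference is that you place the large degree $n+5$ on $a$ via $\C(n+5,2)$, whereas the paper's Figure \ref{fig:tr2} keeps $a$ at degree $5$ and puts $n+5$ on a cycle vertex, i.e.\ uses $\C(5,n+2)\cup(k-1)K_2$.
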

\begin{proof}
We use Lemmas \ref{lem:4} and \ref{lem:star}. If $k\geq 1$, it is easy to modify $G$ but not its sequence, as in Figure \ref{fig:tr2}.

	
	\tikzset{every picture/.style={line width=0.75pt}} 
	\begin{figure}[h!]
		\centering
		\begin{tikzpicture}[x=0.75pt,y=0.75pt,yscale=-0.8,xscale=0.8]
		
		\draw   (392,167.5) .. controls (392,102.05) and (444.34,49) .. (508.9,49) .. controls (573.46,49) and (625.8,102.05) .. (625.8,167.5) .. controls (625.8,232.95) and (573.46,286) .. (508.9,286) .. controls (444.34,286) and (392,232.95) .. (392,167.5) -- cycle ;
		\draw    (510.1,126.19) -- (476.03,55.77) ;
		\draw [shift={(476.03,55.77)}, rotate = 244.18] [color={rgb, 255:red, 0; green, 0; blue, 0 }  ][fill={rgb, 255:red, 0; green, 0; blue, 0 }  ][line width=0.75]      (0, 0) circle [x radius= 3.35, y radius= 3.35]   ;
		\draw    (510.1,126.19) -- (544.17,54.42) ;
		\draw [shift={(544.17,54.42)}, rotate = 295.39] [color={rgb, 255:red, 0; green, 0; blue, 0 }  ][fill={rgb, 255:red, 0; green, 0; blue, 0 }  ][line width=0.75]      (0, 0) circle [x radius= 3.35, y radius= 3.35]   ;
		\draw    (508.9,286) ;
		\draw [shift={(508.9,286)}, rotate = 0] [color={rgb, 255:red, 0; green, 0; blue, 0 }  ][fill={rgb, 255:red, 0; green, 0; blue, 0 }  ][line width=0.75]      (0, 0) circle [x radius= 3.35, y radius= 3.35]   ;
		\draw    (510.1,126.19) -- (625.8,174) ;
		\draw [shift={(510.1,126.19)}, rotate = 22.45] [color={rgb, 255:red, 0; green, 0; blue, 0 }  ][fill={rgb, 255:red, 0; green, 0; blue, 0 }  ][line width=0.75]      (0, 0) circle [x radius= 3.35, y radius= 3.35]   ;
		\draw    (510.1,126.19) -- (394.8,141) ;
		\draw [shift={(394.8,141)}, rotate = 172.68] [color={rgb, 255:red, 0; green, 0; blue, 0 }  ][fill={rgb, 255:red, 0; green, 0; blue, 0 }  ][line width=0.75]      (0, 0) circle [x radius= 3.35, y radius= 3.35]   ;
		\draw    (392.8,183) -- (625.8,174) ;
		\draw [shift={(625.8,174)}, rotate = 357.79] [color={rgb, 255:red, 0; green, 0; blue, 0 }  ][fill={rgb, 255:red, 0; green, 0; blue, 0 }  ][line width=0.75]      (0, 0) circle [x radius= 3.35, y radius= 3.35]   ;
		\draw [shift={(392.8,183)}, rotate = 357.79] [color={rgb, 255:red, 0; green, 0; blue, 0 }  ][fill={rgb, 255:red, 0; green, 0; blue, 0 }  ][line width=0.75]      (0, 0) circle [x radius= 3.35, y radius= 3.35]   ;
		\draw    (425.8,249) .. controls (485.8,235) and (605.8,212) .. (625.8,174) ;
		\draw [shift={(625.8,174)}, rotate = 297.76] [color={rgb, 255:red, 0; green, 0; blue, 0 }  ][fill={rgb, 255:red, 0; green, 0; blue, 0 }  ][line width=0.75]      (0, 0) circle [x radius= 3.35, y radius= 3.35]   ;
		\draw [shift={(425.8,249)}, rotate = 346.87] [color={rgb, 255:red, 0; green, 0; blue, 0 }  ][fill={rgb, 255:red, 0; green, 0; blue, 0 }  ][line width=0.75]      (0, 0) circle [x radius= 3.35, y radius= 3.35]   ;
		\draw    (408.8,227) .. controls (448.8,197) and (565.8,196) .. (625.8,174) ;
		\draw [shift={(408.8,227)}, rotate = 323.13] [color={rgb, 255:red, 0; green, 0; blue, 0 }  ][fill={rgb, 255:red, 0; green, 0; blue, 0 }  ][line width=0.75]      (0, 0) circle [x radius= 3.35, y radius= 3.35]   ;
		\draw    (477,281) .. controls (492.8,265) and (522.8,268) .. (548.8,279) ;
		\draw [shift={(548.8,279)}, rotate = 22.93] [color={rgb, 255:red, 0; green, 0; blue, 0 }  ][fill={rgb, 255:red, 0; green, 0; blue, 0 }  ][line width=0.75]      (0, 0) circle [x radius= 3.35, y radius= 3.35]   ;
		\draw [shift={(477,281)}, rotate = 314.64] [color={rgb, 255:red, 0; green, 0; blue, 0 }  ][fill={rgb, 255:red, 0; green, 0; blue, 0 }  ][line width=0.75]      (0, 0) circle [x radius= 3.35, y radius= 3.35]   ;
		\draw   (53,165.5) .. controls (53,100.05) and (105.34,47) .. (169.9,47) .. controls (234.46,47) and (286.8,100.05) .. (286.8,165.5) .. controls (286.8,230.95) and (234.46,284) .. (169.9,284) .. controls (105.34,284) and (53,230.95) .. (53,165.5) -- cycle ;
		\draw    (171.1,124.19) -- (137.03,53.77) ;
		\draw [shift={(137.03,53.77)}, rotate = 244.18] [color={rgb, 255:red, 0; green, 0; blue, 0 }  ][fill={rgb, 255:red, 0; green, 0; blue, 0 }  ][line width=0.75]      (0, 0) circle [x radius= 3.35, y radius= 3.35]   ;
		\draw    (171.1,124.19) -- (205.17,52.42) ;
		\draw [shift={(205.17,52.42)}, rotate = 295.39] [color={rgb, 255:red, 0; green, 0; blue, 0 }  ][fill={rgb, 255:red, 0; green, 0; blue, 0 }  ][line width=0.75]      (0, 0) circle [x radius= 3.35, y radius= 3.35]   ;
		\draw    (169.9,284) ;
		\draw [shift={(169.9,284)}, rotate = 0] [color={rgb, 255:red, 0; green, 0; blue, 0 }  ][fill={rgb, 255:red, 0; green, 0; blue, 0 }  ][line width=0.75]      (0, 0) circle [x radius= 3.35, y radius= 3.35]   ;
		\draw    (171.1,124.19) -- (286.8,172) ;
		\draw [shift={(171.1,124.19)}, rotate = 22.45] [color={rgb, 255:red, 0; green, 0; blue, 0 }  ][fill={rgb, 255:red, 0; green, 0; blue, 0 }  ][line width=0.75]      (0, 0) circle [x radius= 3.35, y radius= 3.35]   ;
		\draw    (171.1,124.19) -- (53.8,181) ;
		\draw [shift={(53.8,181)}, rotate = 154.16] [color={rgb, 255:red, 0; green, 0; blue, 0 }  ][fill={rgb, 255:red, 0; green, 0; blue, 0 }  ][line width=0.75]      (0, 0) circle [x radius= 3.35, y radius= 3.35]   ;
		\draw    (53.8,181) -- (286.8,172) ;
		\draw [shift={(286.8,172)}, rotate = 357.79] [color={rgb, 255:red, 0; green, 0; blue, 0 }  ][fill={rgb, 255:red, 0; green, 0; blue, 0 }  ][line width=0.75]      (0, 0) circle [x radius= 3.35, y radius= 3.35]   ;
		\draw    (69.8,225) .. controls (109.8,195) and (226.8,194) .. (286.8,172) ;
		\draw [shift={(69.8,225)}, rotate = 323.13] [color={rgb, 255:red, 0; green, 0; blue, 0 }  ][fill={rgb, 255:red, 0; green, 0; blue, 0 }  ][line width=0.75]      (0, 0) circle [x radius= 3.35, y radius= 3.35]   ;
		\draw    (138,279) .. controls (153.8,263) and (183.8,266) .. (209.8,277) ;
		\draw [shift={(209.8,277)}, rotate = 22.93] [color={rgb, 255:red, 0; green, 0; blue, 0 }  ][fill={rgb, 255:red, 0; green, 0; blue, 0 }  ][line width=0.75]      (0, 0) circle [x radius= 3.35, y radius= 3.35]   ;
		\draw [shift={(138,279)}, rotate = 314.64] [color={rgb, 255:red, 0; green, 0; blue, 0 }  ][fill={rgb, 255:red, 0; green, 0; blue, 0 }  ][line width=0.75]      (0, 0) circle [x radius= 3.35, y radius= 3.35]   ;
		\draw    (115.8,270) .. controls (143.8,243) and (204.8,249) .. (225.8,270) ;
		\draw [shift={(225.8,270)}, rotate = 45] [color={rgb, 255:red, 0; green, 0; blue, 0 }  ][fill={rgb, 255:red, 0; green, 0; blue, 0 }  ][line width=0.75]      (0, 0) circle [x radius= 3.35, y radius= 3.35]   ;
		\draw [shift={(115.8,270)}, rotate = 316.04] [color={rgb, 255:red, 0; green, 0; blue, 0 }  ][fill={rgb, 255:red, 0; green, 0; blue, 0 }  ][line width=0.75]      (0, 0) circle [x radius= 3.35, y radius= 3.35]   ;
		\draw    (81.8,244) .. controls (128.8,242) and (254.8,208) .. (286.8,172) ;
		\draw [shift={(81.8,244)}, rotate = 357.56] [color={rgb, 255:red, 0; green, 0; blue, 0 }  ][fill={rgb, 255:red, 0; green, 0; blue, 0 }  ][line width=0.75]      (0, 0) circle [x radius= 3.35, y radius= 3.35]   ;
		\draw    (508.9,49) -- (510.1,126.19) ;
		\draw [shift={(510.1,126.19)}, rotate = 89.11] [color={rgb, 255:red, 0; green, 0; blue, 0 }  ][fill={rgb, 255:red, 0; green, 0; blue, 0 }  ][line width=0.75]      (0, 0) circle [x radius= 3.35, y radius= 3.35]   ;
		\draw [shift={(508.9,49)}, rotate = 89.11] [color={rgb, 255:red, 0; green, 0; blue, 0 }  ][fill={rgb, 255:red, 0; green, 0; blue, 0 }  ][line width=0.75]      (0, 0) circle [x radius= 3.35, y radius= 3.35]   ;
		
		\draw (448,318) node [anchor=north west][inner sep=0.75pt]   [align=left] {$\displaystyle \lambda'=11,7,5,4^{9} ,3$};
		\draw (327,156) node [anchor=north west][inner sep=0.75pt]  [font=\LARGE] [align=left] {$\displaystyle \ncong $};
		\draw (99,307) node [anchor=north west][inner sep=0.75pt]   [align=left] {$\displaystyle \lambda_{2}^{13} =11,7,5,4^{9} ,3$};

		\end{tikzpicture}
\caption{For instance, if $T$ is the star $S_2$ centred at $v$, and $k=2$ (left), we construct another polyhedron of same sequence, where $k=2-1=1$ (right).}
\label{fig:tr2}
\end{figure}
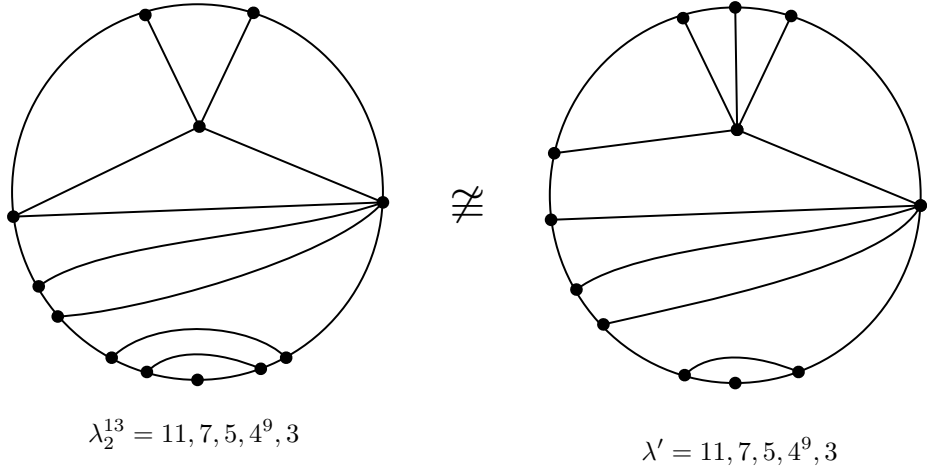
\end{proof}

Thanks to Lemmas \ref{lem:4}, \ref{lem:star}, and \ref{lem:k}, we are able to write the degree sequence of $G$ as
\[p-2,j,5,4^{(j-5)+1+2},3\]
with $j:=\deg_G(v)\geq 6$. It follows that $j-2+4=p$, so that the sequence of $G$ is simply
\[p-2,p-2,5,4^{p-4},3=\alpha(p),\]
$p\geq 8$. By Theorem \ref{thm2}, $\alpha$ is indeed unigraphic. The proof of Theorem \ref{thm1} is complete.

\section{Proof of Theorem \ref{thm3}}
Now let $p\geq 11$, and $\nu(p)$ be a non-increasing unigraphic polyhedral sequence beginning with
\[p-2,d_1,d_2,d_3,\dots,\]
where
\[d_1,d_2,d_3\geq 7\]
and
\begin{equation}
\label{ten}
d_1+d_2+d_3\geq p+10.
\end{equation}
We call $G$ a polyhedron of sequence $\nu$ on $p$ vertices, $y$ the vertex of degree $p-2$, and $a$ the only vertex not adjacent to $y$. According to Lemma \ref{bigcycle}, there is a cycle $C$ containing all vertices of $G-y-a$. 

We write
\[V(G)=\{y\}\cup\{x_1,x_2,x_3\}\cup U,\]
where $\deg(x_i)=d_i$ for $i=1,2,3$. 
Independently of the choice of $a$, planarity does not allow for three elements of $U$ to be all adjacent to $x_1$ and $x_2$, say. 

\begin{lemma}
	\label{lem:a}
	We have $a\in U$, and
	\[ax_1,ax_2,ax_3\in E(G).\]
\end{lemma}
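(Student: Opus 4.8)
The plan is to play the planarity obstruction off against the degree hypothesis $d_1+d_2+d_3\ge p+10$ in a single counting inequality. Put $A_i=N(x_i)\cap U$, let $n_{xx}$ denote the number of edges among $x_1,x_2,x_3$, and let $n_y\in\{2,3\}$ be the number of the $x_i$ adjacent to $y$, so that $n_y=3$ precisely when $a\in U$. Counting the edges incident to $x_1,x_2,x_3$ yields the identity $d_1+d_2+d_3=n_y+2n_{xx}+\sum_i|A_i|$. The first step is to upgrade the stated planarity fact to the following form: since $y$ is adjacent to every vertex except $a$, no two of $x_1,x_2,x_3$ can have three common neighbours lying in $V\setminus\{a,y\}$, for three such neighbours together with the pair and $y$ would span a $K_{3,3}$. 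Crucially, this applies to every pair among the three high-degree vertices, including a pair one of whose members is $a$ (in the case $a\in\{x_1,x_2,x_3\}$).

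Next I would convert these pairwise bounds into a bound on $\sum_i|A_i|$. Every vertex of $U\setminus\{a\}$ is adjacent to $y$; grouping such vertices by how many of the $x_i$ they meet, and using that promoting a vertex to be adjacent to all three $x_i$ is never efficient, the pairwise inequalities cap the number of ``double'' incidences and give $\sum_i|A_i|\le |U\setminus\{a\}|+(6-q)+r$. Here $q$ is the number of length-two paths among $x_1,x_2,x_3$ whose central vertex is adjacent to $y$, and $r=|\{\,i:a\in A_i\,\}|$ records the incidences contributed by $a$ itself (so $r=0$ whenever $a\notin U$). Substituting into the identity produces the master inequality $d_1+d_2+d_3\le p+4+(2n_{xx}-q)+r$.

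I would then split on whether $a\in U$. If $a=x_k$ for some $k$, then $n_y=2$ and $r=0$; since a length-two path centred at $a$ does not contribute to $q$ (as $a\not\sim y$), a short check of the possibilities for the edges among $x_1,x_2,x_3$ shows $2n_{xx}-q\le 4$, whence $d_1+d_2+d_3\le p+8<p+10$, contradicting the hypothesis. Therefore $a\in U$. With $a\in U$ we have $n_y=3$ and all three $x_i$ adjacent to $y$, so $q$ becomes the full count of length-two paths and a similar check gives $2n_{xx}-q\le 3$; the master inequality then reads $d_1+d_2+d_3\le p+7+r$. As $r\le 3$, the hypothesis $d_1+d_2+d_3\ge p+10$ forces $r=3$, that is $a\in A_1\cap A_2\cap A_3$, which is exactly $ax_1,ax_2,ax_3\in E(G)$.

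The delicate point, and the one I expect to be the main obstacle, is the bookkeeping behind the correction $q$: whether the ``third vertex'' acting as a common neighbour of a pair really lies in $V\setminus\{a,y\}$ --- equivalently, whether it is adjacent to $y$ --- hinges on whether that vertex happens to be $a$. This is precisely the meaning of ``independently of the choice of $a$''. Discarding the paths centred at $a$ while retaining those centred at a neighbour of $y$ is what drops the bound in the case $a=x_k$ strictly below $p+10$, rather than letting it reach the threshold; pinning this down correctly is the heart of the argument, the remaining optimisation of the incidence count being routine.
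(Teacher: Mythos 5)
Your proof is correct and follows essentially the same strategy as the paper's: both derive a contradiction with $d_1+d_2+d_3\ge p+10$ from the planarity ($K_{3,3}$) fact that two of the $x_i$ cannot have three common neighbours all adjacent to $y$. The only difference is organisational --- the paper enumerates subcases by how many vertices of $U$ are adjacent to all three (resp.\ two) of $x_1,x_2,x_3$, whereas you package the same count into one master inequality with the corrections $n_{xx}$, $q$, $r$ --- and your resulting bounds ($p+8$ when $a\in\{x_1,x_2,x_3\}$, $p+7+r$ when $a\in U$) are consistent with, indeed slightly sharper than, the paper's.
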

\begin{proof}
	The strategy is to show that in all other cases, we get $\deg(x_1)+\deg(x_2)+\deg(x_3)<p+10$, contradicting \eqref{ten}. Firstly, we distinguish between whether or not $a\in\{x_1,x_2,x_3\}$. 
	\begin{enumerate}
		\item
		$a\in\{x_1,x_2,x_3\}$.  
		There are a few subcases.
		\begin{enumerate}
			\item
			Say that two elements of $U$ are both adjacent to all three of $x_1,x_2,x_3$. Then by planarity, all $p-6$ other elements of $U$ are adjacent each to at most one of $x_1,x_2,x_3$. 
			It follows that
			\begin{align*}
			d_1+d_2+d_3&\leq |\{yx_1,yx_2,yx_3\}|+2|\{x_1x_2,x_1x_3,x_2x_3\}|+2\cdot 3+(p-6)
			\\&=3+6+6+(p-6)=p+9.
			\end{align*}
			\item
			Say that exactly one element $z_1$ of $U$ is adjacent to all of $x_1,x_2,x_3$. By planarity, at most two elements of $U\setminus\{z_1\}$ may be adjacent to two of $x_1,x_2,x_3$. In this case we have
			\[d_1+d_2+d_3\leq 3+6+|\{z_1x_1,z_1x_2,z_1x_3\}|+2+2+(p-4-1-2)=p+9.\]
			\item
			The remaining case is, no element of $U$ is adjacent to all three of $x_1,x_2,x_3$. 
			Similarly to the preceding cases, one checks that
			\[d_1+d_2+d_3\leq 3+12+(p-4-3)=p+8.\]
		\end{enumerate}
		\item $a\not\in\{x_1,x_2,x_3\}$, i.e. $a\in U$. 
		By planarity, in this scenario at most one element $z_1$ of $U$ may be adjacent to all three of $x_1,x_2,x_3$.
		\begin{enumerate}
			\item Let $z_1\in U\setminus\{a\}$ be adjacent to all three of $x_1,x_2,x_3$. Then at most three of $U\setminus\{z_1\}$ (including $a$) may be adjacent to two of $x_1,x_2,x_3$. Moreover, in this case planarity does not allow for $x_1,x_2,x_3$ to be all pairwise adjacent. We compute
			\[d_1+d_2+d_3\leq 3+4+|\{z_1x_1,z_1x_2,z_1x_3\}|+6+(p-4-4)=p+8.\]
			\item
			Suppose that no element of $U$ is adjacent to all three of $x_1,x_2,x_3$. As $G-y-a$ is outerplanar, at most four of $U$ (including $a$) may be adjacent to two of $x_1,x_2,x_3$, so that
			\[d_1+d_2+d_3\leq 3+6+8+(p-4-4)=p+9.\]
		\end{enumerate}
	\end{enumerate}
\end{proof}
For $1\leq i<j\leq 3$, let
\[X_{ij}:=|\{\text{elements of } U\setminus\{a\} \text{ adjacent to both } x_i,x_j\}|.\]
By Lemma \ref{lem:a}, $X_{ij}\leq 1$ for every $i<j$. Therefore,
\[d_1+d_2+d_3\leq 3+6+|\{ax_1,ax_2,ax_3\}|+6+(p-4-4)=p+10.\]
Combining with \eqref{ten}, we actually have $X_{ij}=1$ for every $i<j$, and moreover each vertex of $U$ is adjacent to at least one of $x_1,x_2,x_3$. An illustration of $G$ is given in Figure \ref{fig:nu}. At least two of $d_1,d_2,d_3$ have to be equal to ensure that $\nu_m$ is unigraphic. Note that $G$ is maximal planar. Moreover, $a$ does not lie on the cycle $C$, $\deg(a)=3$, and all edges between two elements of $U$ lie on the cycle $C$. The proof of Theorem \ref{thm3} is complete.

\begin{remark}
Removing the condition $d_1,d_2,d_3\geq 7$ from Theorem \ref{thm3}, apart from $\nu_m(p)$, one also recovers the two exceptional sequences in \eqref{eqn:exc}.
\end{remark}

\begin{figure}[h!]
	\centering
	\tikzset{every picture/.style={line width=0.75pt}} 
	
	\begin{tikzpicture}[x=0.75pt,y=0.75pt,yscale=-0.7,xscale=0.7]
	
	\draw    (337,27) -- (457,227) ;
	\draw [shift={(457,227)}, rotate = 59.04] [color={rgb, 255:red, 0; green, 0; blue, 0 }  ][fill={rgb, 255:red, 0; green, 0; blue, 0 }  ][line width=0.75]      (0, 0) circle [x radius= 3.35, y radius= 3.35]   ;
	\draw [shift={(337,27)}, rotate = 59.04] [color={rgb, 255:red, 0; green, 0; blue, 0 }  ][fill={rgb, 255:red, 0; green, 0; blue, 0 }  ][line width=0.75]      (0, 0) circle [x radius= 3.35, y radius= 3.35]   ;
	\draw    (217,227) -- (457,227) ;
	\draw    (337,27) -- (217,227) ;
	\draw [shift={(217,227)}, rotate = 120.96] [color={rgb, 255:red, 0; green, 0; blue, 0 }  ][fill={rgb, 255:red, 0; green, 0; blue, 0 }  ][line width=0.75]      (0, 0) circle [x radius= 3.35, y radius= 3.35]   ;
	\draw [shift={(337,27)}, rotate = 120.96] [color={rgb, 255:red, 0; green, 0; blue, 0 }  ][fill={rgb, 255:red, 0; green, 0; blue, 0 }  ][line width=0.75]      (0, 0) circle [x radius= 3.35, y radius= 3.35]   ;
	\draw    (217,227) .. controls (109.8,157) and (226.8,-35) .. (337,27) ;
	\draw    (217,227) .. controls (215.8,354) and (455.8,354) .. (457,227) ;
	\draw    (337,27) .. controls (428.8,-21) and (570.8,145) .. (457,227) ;
	\draw    (337,157) -- (457,227) ;
	\draw [shift={(337,157)}, rotate = 30.26] [color={rgb, 255:red, 0; green, 0; blue, 0 }  ][fill={rgb, 255:red, 0; green, 0; blue, 0 }  ][line width=0.75]      (0, 0) circle [x radius= 3.35, y radius= 3.35]   ;
	\draw    (337,27) -- (337,157) ;
	\draw    (337,157) -- (217,227) ;
	\draw    (337,27) -- (224.8,43) ;
	\draw [shift={(224.8,43)}, rotate = 171.88] [color={rgb, 255:red, 0; green, 0; blue, 0 }  ][fill={rgb, 255:red, 0; green, 0; blue, 0 }  ][line width=0.75]      (0, 0) circle [x radius= 3.35, y radius= 3.35]   ;
	\draw [shift={(337,27)}, rotate = 171.88] [color={rgb, 255:red, 0; green, 0; blue, 0 }  ][fill={rgb, 255:red, 0; green, 0; blue, 0 }  ][line width=0.75]      (0, 0) circle [x radius= 3.35, y radius= 3.35]   ;
	\draw    (337,27) -- (190.8,86) ;
	\draw [shift={(190.8,86)}, rotate = 158.02] [color={rgb, 255:red, 0; green, 0; blue, 0 }  ][fill={rgb, 255:red, 0; green, 0; blue, 0 }  ][line width=0.75]      (0, 0) circle [x radius= 3.35, y radius= 3.35]   ;
	\draw    (337,27) -- (174.8,139) ;
	\draw [shift={(174.8,139)}, rotate = 145.37] [color={rgb, 255:red, 0; green, 0; blue, 0 }  ][fill={rgb, 255:red, 0; green, 0; blue, 0 }  ][line width=0.75]      (0, 0) circle [x radius= 3.35, y radius= 3.35]   ;
	\draw    (266,19) ;
	\draw [shift={(266,19)}, rotate = 0] [color={rgb, 255:red, 0; green, 0; blue, 0 }  ][fill={rgb, 255:red, 0; green, 0; blue, 0 }  ][line width=0.75]      (0, 0) circle [x radius= 3.35, y radius= 3.35]   ;
	\draw    (217,227) -- (287.8,315) ;
	\draw [shift={(287.8,315)}, rotate = 51.18] [color={rgb, 255:red, 0; green, 0; blue, 0 }  ][fill={rgb, 255:red, 0; green, 0; blue, 0 }  ][line width=0.75]      (0, 0) circle [x radius= 3.35, y radius= 3.35]   ;
	\draw    (217,227) -- (343.8,323) ;
	\draw [shift={(343.8,323)}, rotate = 37.13] [color={rgb, 255:red, 0; green, 0; blue, 0 }  ][fill={rgb, 255:red, 0; green, 0; blue, 0 }  ][line width=0.75]      (0, 0) circle [x radius= 3.35, y radius= 3.35]   ;
	\draw    (217,227) -- (422.8,297) ;
	\draw [shift={(422.8,297)}, rotate = 18.79] [color={rgb, 255:red, 0; green, 0; blue, 0 }  ][fill={rgb, 255:red, 0; green, 0; blue, 0 }  ][line width=0.75]      (0, 0) circle [x radius= 3.35, y radius= 3.35]   ;
	\draw    (239,287) ;
	\draw [shift={(239,287)}, rotate = 0] [color={rgb, 255:red, 0; green, 0; blue, 0 }  ][fill={rgb, 255:red, 0; green, 0; blue, 0 }  ][line width=0.75]      (0, 0) circle [x radius= 3.35, y radius= 3.35]   ;
	\draw    (466.8,69) -- (457,227) ;
	\draw [shift={(457,227)}, rotate = 93.55] [color={rgb, 255:red, 0; green, 0; blue, 0 }  ][fill={rgb, 255:red, 0; green, 0; blue, 0 }  ][line width=0.75]      (0, 0) circle [x radius= 3.35, y radius= 3.35]   ;
	\draw [shift={(466.8,69)}, rotate = 93.55] [color={rgb, 255:red, 0; green, 0; blue, 0 }  ][fill={rgb, 255:red, 0; green, 0; blue, 0 }  ][line width=0.75]      (0, 0) circle [x radius= 3.35, y radius= 3.35]   ;
	\draw    (500,161) ;
	\draw [shift={(500,161)}, rotate = 0] [color={rgb, 255:red, 0; green, 0; blue, 0 }  ][fill={rgb, 255:red, 0; green, 0; blue, 0 }  ][line width=0.75]      (0, 0) circle [x radius= 3.35, y radius= 3.35]   ;
	
	\draw (318,139) node [anchor=north west][inner sep=0.75pt]   [align=left] {$\displaystyle a$};
	\draw (328,-4) node [anchor=north west][inner sep=0.75pt]   [align=left] {$\displaystyle d_{1}$};
	\draw (188,226) node [anchor=north west][inner sep=0.75pt]   [align=left] {$\displaystyle d_{2}$};
	\draw (468,219) node [anchor=north west][inner sep=0.75pt]   [align=left] {$\displaystyle d_{3}$};
	\draw (100,73) node [anchor=north west][inner sep=0.75pt]   [align=left] {$\displaystyle G-y:$};

	\end{tikzpicture}
	\caption{The realisation of the sequence $\nu_3(15)$.}
	\label{fig:nu}
\end{figure}
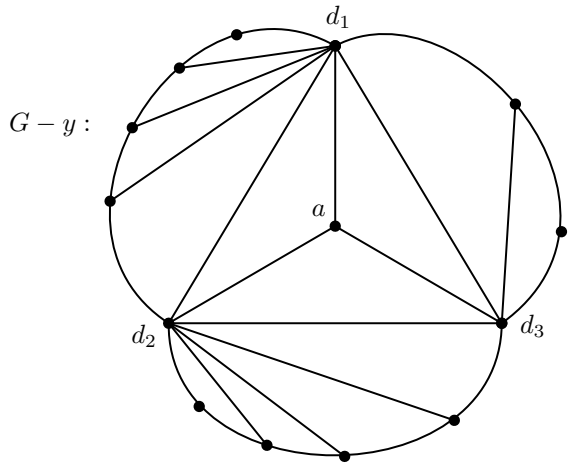

\appendix
\section{Another family of unigraphic sequences: proof of Proposition \ref{twoseq}}
\label{appa}

Let $p\geq 16$ be even, and
\[\mu=p-2,\frac{p-2}{2}, 6^{(p-2)/2},3^{(p-2)/2}.\]
If $\mu$ has a polyhedral realisation $G$, then $G$ is a maximal planar graph (triangulation of the sphere).

As previously, call $y$ the vertex of degree $p-2$, $F:=G-y$, $a$ the only vertex of $G$ not adjacent to $y$, and $W:=V(F)\setminus\{a\}$. Thanks to the first statement in Lemma \ref{bigcycle}, every vertex of $F$, except possibly $a$, lies on a cycle $C$. Moreover, $a$ does not lie on $C$, as otherwise $G$ would have a quadrilateral face
\[[a,v_1,y,v_2].\]

We write
\[U=U_6\cup U_3,\]
where the elements of $U_i$ have degree $i$ in $G$. Note that $\deg_F(u)=\deg_G(u)-3$ for all $u\in U$. Call $R=F-E(C)$. There are three cases to consider.
\begin{enumerate}
\item
$\deg(a)=\frac{p-2}{2}$. In this case, we have
\[au\in E(G)\]
for all $u\in U_6$. The vertices of $U_3$ are isolated in $R$. Each $u\in U_6$ has two more edges that are not yet accounted for. Moreover, each $u\in U_6$ is cut off by planarity via the edges $au$, $u\in U_6$ from all others elements of $U_6$ save two. Then there is only one way to complete the construction of $G$ (Figure \ref{fig:mu}).

\begin{figure}[h!]
\centering\tikzset{every picture/.style={line width=0.75pt}} 

\begin{tikzpicture}[x=0.75pt,y=0.75pt,yscale=-0.7,xscale=0.56]

\draw    (290,50) -- (360,50) ;
\draw [shift={(360,50)}, rotate = 0] [color={rgb, 255:red, 0; green, 0; blue, 0 }  ][fill={rgb, 255:red, 0; green, 0; blue, 0 }  ][line width=0.75]      (0, 0) circle [x radius= 3.35, y radius= 3.35]   ;
\draw [shift={(290,50)}, rotate = 0] [color={rgb, 255:red, 0; green, 0; blue, 0 }  ][fill={rgb, 255:red, 0; green, 0; blue, 0 }  ][line width=0.75]      (0, 0) circle [x radius= 3.35, y radius= 3.35]   ;
\draw    (360,50) -- (440,70) ;
\draw [shift={(440,70)}, rotate = 14.04] [color={rgb, 255:red, 0; green, 0; blue, 0 }  ][fill={rgb, 255:red, 0; green, 0; blue, 0 }  ][line width=0.75]      (0, 0) circle [x radius= 3.35, y radius= 3.35]   ;
\draw    (440,70) -- (500,130) ;
\draw [shift={(500,130)}, rotate = 45] [color={rgb, 255:red, 0; green, 0; blue, 0 }  ][fill={rgb, 255:red, 0; green, 0; blue, 0 }  ][line width=0.75]      (0, 0) circle [x radius= 3.35, y radius= 3.35]   ;
\draw    (500,130) -- (520,200) ;
\draw [shift={(520,200)}, rotate = 74.05] [color={rgb, 255:red, 0; green, 0; blue, 0 }  ][fill={rgb, 255:red, 0; green, 0; blue, 0 }  ][line width=0.75]      (0, 0) circle [x radius= 3.35, y radius= 3.35]   ;
\draw    (290,50) -- (210,70) ;
\draw [shift={(210,70)}, rotate = 165.96] [color={rgb, 255:red, 0; green, 0; blue, 0 }  ][fill={rgb, 255:red, 0; green, 0; blue, 0 }  ][line width=0.75]      (0, 0) circle [x radius= 3.35, y radius= 3.35]   ;
\draw    (210,70) -- (150,130) ;
\draw [shift={(150,130)}, rotate = 135] [color={rgb, 255:red, 0; green, 0; blue, 0 }  ][fill={rgb, 255:red, 0; green, 0; blue, 0 }  ][line width=0.75]      (0, 0) circle [x radius= 3.35, y radius= 3.35]   ;
\draw    (150,130) -- (130,200) ;
\draw [shift={(130,200)}, rotate = 105.95] [color={rgb, 255:red, 0; green, 0; blue, 0 }  ][fill={rgb, 255:red, 0; green, 0; blue, 0 }  ][line width=0.75]      (0, 0) circle [x radius= 3.35, y radius= 3.35]   ;
\draw    (290,340) -- (360,340) ;
\draw [shift={(360,340)}, rotate = 0] [color={rgb, 255:red, 0; green, 0; blue, 0 }  ][fill={rgb, 255:red, 0; green, 0; blue, 0 }  ][line width=0.75]      (0, 0) circle [x radius= 3.35, y radius= 3.35]   ;
\draw [shift={(290,340)}, rotate = 0] [color={rgb, 255:red, 0; green, 0; blue, 0 }  ][fill={rgb, 255:red, 0; green, 0; blue, 0 }  ][line width=0.75]      (0, 0) circle [x radius= 3.35, y radius= 3.35]   ;
\draw    (360,340) -- (440,321.33) ;
\draw [shift={(440,321.33)}, rotate = 346.87] [color={rgb, 255:red, 0; green, 0; blue, 0 }  ][fill={rgb, 255:red, 0; green, 0; blue, 0 }  ][line width=0.75]      (0, 0) circle [x radius= 3.35, y radius= 3.35]   ;
\draw    (440,321.33) -- (500,265.33) ;
\draw [shift={(500,265.33)}, rotate = 316.97] [color={rgb, 255:red, 0; green, 0; blue, 0 }  ][fill={rgb, 255:red, 0; green, 0; blue, 0 }  ][line width=0.75]      (0, 0) circle [x radius= 3.35, y radius= 3.35]   ;
\draw    (500,265.33) -- (520,200) ;
\draw [shift={(520,200)}, rotate = 287.02] [color={rgb, 255:red, 0; green, 0; blue, 0 }  ][fill={rgb, 255:red, 0; green, 0; blue, 0 }  ][line width=0.75]      (0, 0) circle [x radius= 3.35, y radius= 3.35]   ;
\draw    (290,340) -- (210,321.33) ;
\draw [shift={(210,321.33)}, rotate = 193.13] [color={rgb, 255:red, 0; green, 0; blue, 0 }  ][fill={rgb, 255:red, 0; green, 0; blue, 0 }  ][line width=0.75]      (0, 0) circle [x radius= 3.35, y radius= 3.35]   ;
\draw    (210,321.33) -- (150,265.33) ;
\draw [shift={(150,265.33)}, rotate = 223.03] [color={rgb, 255:red, 0; green, 0; blue, 0 }  ][fill={rgb, 255:red, 0; green, 0; blue, 0 }  ][line width=0.75]      (0, 0) circle [x radius= 3.35, y radius= 3.35]   ;
\draw    (150,265.33) -- (130,200) ;
\draw [shift={(130,200)}, rotate = 252.98] [color={rgb, 255:red, 0; green, 0; blue, 0 }  ][fill={rgb, 255:red, 0; green, 0; blue, 0 }  ][line width=0.75]      (0, 0) circle [x radius= 3.35, y radius= 3.35]   ;
\draw    (150,265.33) -- (150,130) ;
\draw    (150,130) -- (290,50) ;
\draw    (290,50) -- (440,70) ;
\draw    (440,70) -- (520,200) ;
\draw    (150,265.33) -- (290,340) ;
\draw    (290,340) -- (440,321.33) ;
\draw    (520,200) -- (440,321.33) ;
\draw    (325,195) -- (520,200) ;
\draw [shift={(325,195)}, rotate = 1.47] [color={rgb, 255:red, 0; green, 0; blue, 0 }  ][fill={rgb, 255:red, 0; green, 0; blue, 0 }  ][line width=0.75]      (0, 0) circle [x radius= 3.35, y radius= 3.35]   ;
\draw    (440,70) -- (325,195) ;
\draw    (290,50) -- (325,195) ;
\draw    (150,130) -- (325,195) ;
\draw    (150,265.33) -- (325,195) ;
\draw    (325,195) -- (440,321.33) ;
\draw    (325,195) -- (290,340) ;

\end{tikzpicture}
\caption{The graph $F=G-y$ when $\deg(a)=\frac{p-2}{2}$, and $p=16$.}
\label{fig:mu}
\end{figure}
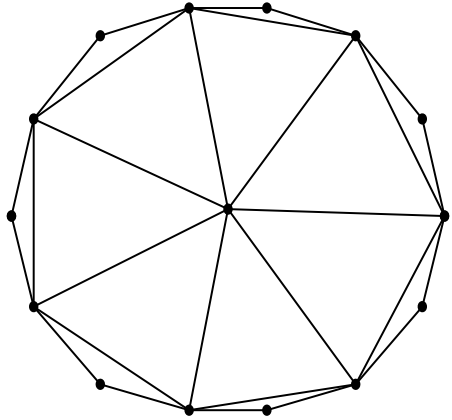

\item
$\deg(a)\neq\frac{p-2}{2}$. Let $x$ be the vertex of degree $\frac{p-2}{2}$ in $G$. Whether or not $ax\in E(G)$, $x$ is adjacent in $R$ to at least $\frac{p-2}{2}-4$ elements of $U_6$
\begin{equation}
\label{wi}
u_i, \quad 1\leq i\leq\frac{p-2}{2}-4,
\end{equation}
ordered in this way around $C$.
In particular, $u_1$ and $u_{\frac{p-2}{2}-4}$ are the closest among these to $x$ along $C$ on either side.

Consider the cycle $C'$ given by the edge $xu_1$ and the arc $xu_1$ of $C$ not containing $u_2$. Since $p\geq 16$, then $(p-2)/2-4\geq 3$, hence we can assume that $au_1\not\in E(G)$, and moreover $a$ does not lie inside or on $C'$. Therefore, this cycle delimits an outerplanar graph $F'$ (Figure \ref{fig:mu2}).

\begin{figure}[h!]
	\centering
	\includegraphics[width=6cm]{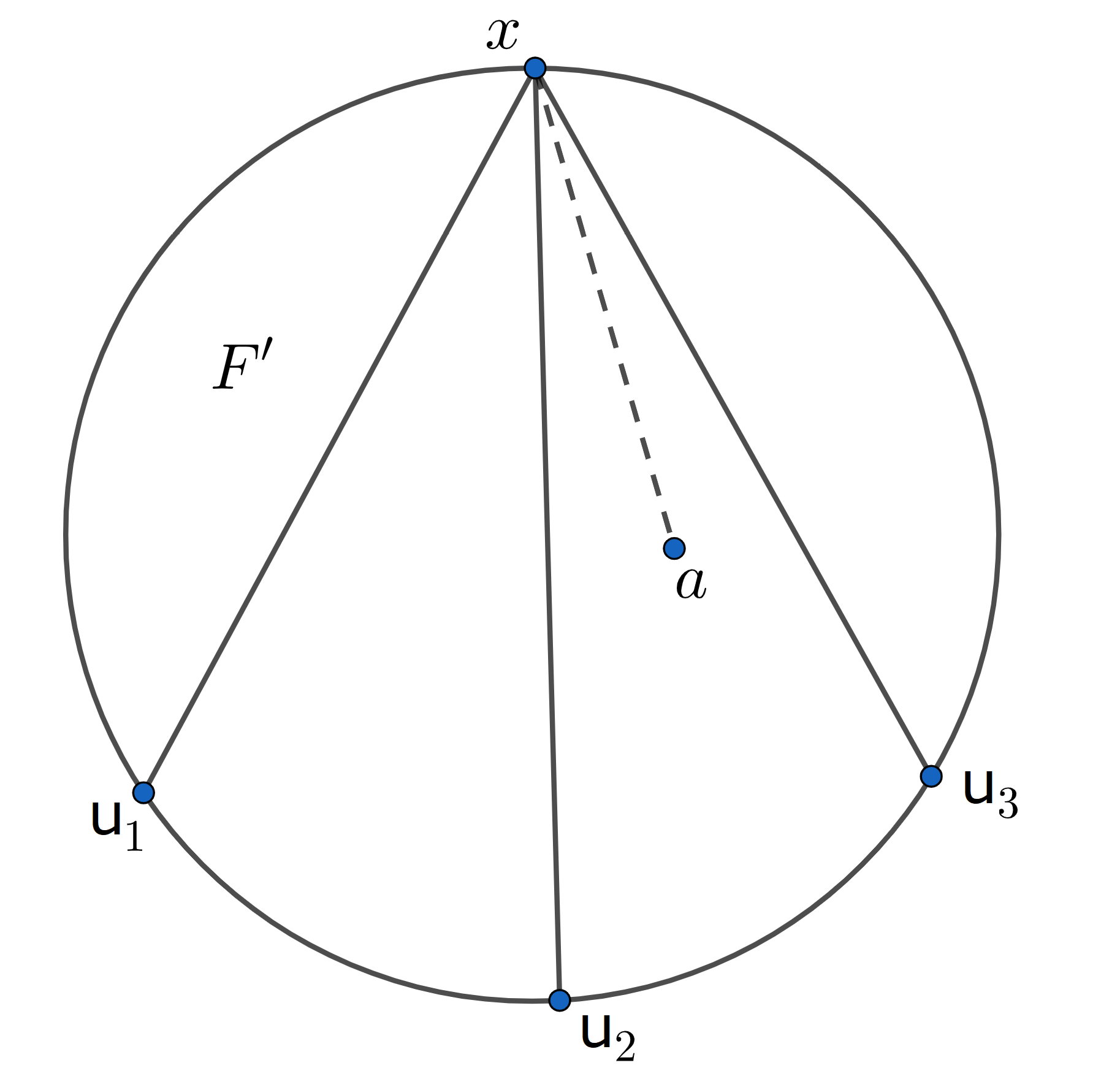}
	\caption{We sketch $F=G-y$ as usual. Whether or not $ax\in E(G)$, since $\deg_G(x)=7$, $x$ is adjacent to at least three elements of $U_6$ distinct from $a$. Then we can define the outerplanar graph $F'$, containing $u_1$ in the picture, such that $au_1\not\in E(G)$.}
	\label{fig:mu2}
\end{figure}

The graph $F'-x$ must have a non-trivial connected component $H$ containing $u_1$: otherwise, the degree in $G$ of $u_1$ would be at most $5$, seeing as $u_1$ is cut off by planarity from $a$ and all other $u_i$'s \eqref{wi} except $u_2$. By construction, $H$ contains, apart from $u_1$, a subset of vertices $U'\subset U_6$, of cardinality at most four, since these are distinct from the $u_i$'s in \eqref{wi}. The elements of $U'$ appear along the arc $xu_1$ of $C'$, and are hence cut off from $a$ and from all elements of $U_6$ except each other and $u_1$. They are not adjacent to $x$ either, by construction.

Now $\deg_H(u_1)$ is either $1$ or $2$, according to whether $u_1u_2\in E(G)$ or not. Each element of $U'$ has degree at least $3$ in $H$. Thereby, $|U'|\geq 4$, so that ultimately $|U'|=4$, i.e. $|V(H)|=5$. We use the labelling
\[V(H)=\{u'_1,u'_2,u'_3,u'_4,u_1\},\]
in this order around $C$. Suppose for contradiction that $u'_1u'_3\in E(G)$. Then $u'_2$ is cut off from $u'_4$ and $u_1$, and thus cannot have degree at least $3$ in $H$. Then $u'_1u'_3\not\in E(G)$, so that
\[u'_1u'_2,u'_1u'_4,u'_1u_1\in E(G).\]
But then $u'_3$ is cut off from $u_1$, and since $u'_1u'_3\not\in E(G)$, then $\deg_H(u'_3)\leq 2$. Either way, we have reached a contradiction.
\end{enumerate}
Having analysed all cases, we conclude that $\mu$ is unigraphic.

\addcontentsline{toc}{section}{References}

\end{document}